\renewcommand{\geq}{\geqslant}
\newcommand{\R}{\mathds R}
\newcommand{\sL}{\mathcal{L}}
\def\<{\langle}
\def\>{\rangle}
\def\R{\mathbb R}
\def\<{\langle} \def\>{\rangle}
 \def\beq{\begin{equation}}
\def\to{\rightarrow}
\def\8{\infty}\def\3{\triangle}
\def\1{\lesssim}
\renewcommand{\tilde}{\widetilde}
\newtheorem{theorem}{Theorem}[section]
\newtheorem{lemma}[theorem]{Lemma}
\newtheorem{proposition}[theorem]{Proposition}
\theoremstyle{definition}
\newtheorem{definition}[theorem]{Definition}
\newtheorem{remark}[theorem]{Remark}
\numberwithin{equation}{section}
\begin{document}
\allowdisplaybreaks
\title[
Local boundedness of parabolic fractional $p$-Laplacian type equations] {
Local boundedness of solutions to parabolic equations associated with fractional $p$-Laplacian type operators}

\author{Takashi Kumagai\qquad Jian Wang  \qquad \hbox{ and }\qquad
Meng-ge Zhang}

 \date{}

\maketitle

\begin{abstract}
In this paper, we study the local boundedness of local weak solutions to the following parabolic equation associated with fractional $p$-Laplacian type operators
$$
\partial_t u(t,x)-\text{p.v.}\int_{\R^d}|u(t,y)-u(t,x)|^{p-2}(u(t,y)-u(t,x))J(t;x,y)\,dy=0,\quad (t,x)\in \R\times \R^d,
$$
where $\text{p.v.}$ means the integral in the principal value sense, $p\in(1,\infty)$ and $J(t;x,y)$ is comparable to the kernel of the fractional $p$-Laplacian operator $|x-y|^{-d-sp}$ with $s\in(0,1)$ and uniformly in $(t;x,y)\in\R\times\R^d\times\R^d$.
Unlike existing results in the literature, the local boundedness of the solutions obtained in this paper extends the known results
for the linear case (i.e., the case that $p=2$), in particular with a nonlocal parabolic tail that uses the $L^1$-norm in time for all $p\in (1,\infty)$. The proof is based on a new
level set truncation in the De Giorgi-Nash-Moser iteration
 and a careful choice of iteration orders,
  as well as a general Caccioppoli-type inequality that is efficiently
applied to fractional $p$-Laplacian type operators with all $p>1$.

\medskip

\noindent\textbf{Keywords} fractional $p$-Laplacian type operator; local boundedness; the Caccioppoli-type inequality; the De Giorgi-Nash-Moser iteration

\medskip

\noindent \textbf{MSC 2010} 35B65; 35R11; 35K55; 60J75
\end{abstract}
\allowdisplaybreaks

\section{Introduction and main result}
The purpose of this paper is to establish the local boundedness of
subsolutions to the following equation
\begin{equation}\label{EQ}
\frac{\partial u(t,x)}{\partial t}-\sL_tu(t,x)=0,\quad (t,x)\in I\times D\subset \R\times \R^d,
\end{equation}
where $I$ is a bounded interval of $\R$, $D$ is a bounded domain of $\R^d$, and $\sL_t$ is a nonlocal (nonlinear) operator of the fractional $p$-Laplacian type with $p>1$; that is,
$$\sL_tu(x)=\text{p.v.}\int_{\R^d}|u(t,y)-u(t,x)|^{p-2}(u(t,y)-u(t,x))J(t;x,y)\,dy,\quad (t,x)\in\R\times\R^d.$$
Here, the symbol $\text{p.v.}$ means the integral in the sense of the principal value, and the kernel $J(t;x,y)$ satisfies that for any $t>0$ and $x,y\in \R^d$, $J(t;x,y)=
J(t;y,x)$, and that there are $\Lambda\ge1$ and $s\in (0,1)$ so that for all $t\in \R$ and $x,y\in \R^d$,
\begin{equation}\label{J0}
 \frac{\Lambda^{-1}}{|x-y|^{d+sp}}\le J(t;x,y)\le\frac{\Lambda}{|x-y|^{d+sp}}. \end{equation} When $p=2$ and
$J(t;x,y)=|x-y|^{-d-2s}$, the nonlocal operator $\sL_t$ boils down to the well-known fractional Laplacian $-(-\Delta)^s$  multiplied by a proper constant,
which is a linear operator.
Note that the operator $\sL_t$ corresponds to the following nonlocal (nonlinear) $p$-form
$$\mathcal{E}_t(f,g)=\frac{1}{2}\iint_{\R^d\times\R^d} |f(y)-f(x)|^{p-2}(f(y)-f(x))(g(y)-g(x))J(t;x,y)\,dx\,dy.$$

\subsection{Notations} For any measurable function $f(t,x)$ on $I\times D$,
$$\oint_{I}\oint_{D}
 f(t,x) \,dx\,dt:= \frac{1}{|I\times D|}\int_{I}\int_{D}
 f(t,x) \,dx\,dt.$$
Fix $p>1$.
For a domain $D\subset\R^d$, the fractional Sobolev space $W^{s,p}(D)$ consists of all functions $f\in L^p(D;dx)
=: L^p(D)$ such that
$$[f]_{W^{s,p}(D)}:=\left(\int_D\int_D\frac{|f(x)-f(y)|^p}{|x-y|^{d+sp}}\,dx\,dy\right)^{{1}/{p}}<\infty.$$
The norm of $f\in W^{s,p}(D)$ is given by
$$\|f\|_{W^{s,p}(D)}=[f]_{W^{s,p}(D)}+\|f\|_{L^p(D)},$$ where $\|f\|_{L^p(D)}=\left(\int_D |f(x)|^p\,d x\right)^{1/p}.$

For $q\in (0,\infty]$, the parabolic Sobolev space $L^q(0,T;W^{s,p}(D))$ is the set of measurable functions on $(0,T)\times D$ such that
$$\|f\|_{L^q(0,T;W^{s,p}(D))}=\left(\int_0^T\|f(t,\cdot)\|^q _{W^{s,p}(D)}\,dt\right)^{{1}/{(q\vee1)}}<\infty.$$
In particular, when $q=\infty$,  $\|f\|_{L^\infty(0,T;W^{s,p}(D))}=\sup_{t\in [0,T]} \|f(t,\cdot)\| _{W^{s,p}(D)}$.
\subsubsection{Subsolution}
We call that $u(t,x)$ is a {\it subsolution} (resp.\ {\it supsolution}) associated with the operator $\sL_t$ on $I\times D\subset \R\times \R^d$, if  the following equation holds
\begin{equation}\label{PE} \frac{\partial u(t,x)}{\partial t}-\sL_t u(t,\cdot)(x)\le0\,\, (\hbox{resp.} \ge0),\quad (t,x)\in I\times D. \end{equation}
Furthermore, $u(t,x)$ is a \textit{solution} associated with  the operator $\sL_t$ on $I\times D\subset \R\times \R^d$, if$$ \frac{\partial u(t,x)}{\partial t}-\sL_t u(t,\cdot)(x)=0,\quad (t,x)\in I\times D. $$
The existence and the
uniqueness of the solution associated with  the operator $\sL_t$ on $I\times D\subset \R\times \R^d$ is proved in \cite[Section 2.1.1]{St1}. Some properties and estimates of subsolutions to the operator $\sL_t$ on $I\times D$
are also considered in \cite[Section 3]{St1}. In particular, we have the following simple lemma, see \cite[Lemma 3.1]{St1}.
\begin{lemma}\label{Z}
If $u$ is a subsolution  associated with  the operator $\sL_t$ on $I\times D\subset \R\times \R^d$, then $u_+:=\max\{u,0\}$ is also a subsolution.
\end{lemma}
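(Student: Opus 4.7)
The plan is to verify the subsolution inequality for $u_+$ pointwise by splitting according to the sign of $u(t,x)$. On the open set $\{u>0\}$, $u_+$ coincides with $u$, so $\partial_t u_+=\partial_t u$, and the task reduces to establishing the nonlocal comparison $\sL_t u_+(t,\cdot)(x)\geq \sL_t u(t,\cdot)(x)$; combined with the subsolution property of $u$, this yields $\partial_t u_+-\sL_t u_+\leq 0$ there. On the open set $\{u<0\}$, $u_+$ vanishes in a neighbourhood so $\partial_t u_+=0$, and it is enough to show $\sL_t u_+(t,\cdot)(x)\geq 0$; this is immediate because $u_+\geq 0 = u_+(x)$ forces the integrand to equal $u_+(y)^{p-1}J(t;x,y)\geq 0$.

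The key ingredient is the monotonicity of the map $\tau\mapsto|\tau|^{p-2}\tau$, which is strictly increasing on $\R$ for every $p>1$. On $\{u(x)>0\}$ I would compare the two integrands pointwise in $y$: when $u(y)\geq 0$ one has $u_+(y)-u_+(x)=u(y)-u(x)$, so the two terms coincide; when $u(y)<0$, one has $u_+(y)-u_+(x)=-u(x)>u(y)-u(x)$, and monotonicity gives
$$|u_+(y)-u_+(x)|^{p-2}(u_+(y)-u_+(x))\geq |u(y)-u(x)|^{p-2}(u(y)-u(x)).$$
Multiplying by $J(t;x,y)\geq 0$ and integrating in $y$ produces the desired non-negativity of $\sL_t u_+(x)-\sL_t u(x)$.

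The main obstacle is making sense of this comparison in the principal-value sense, since the integrands are singular as $y\to x$. In the case $u(x)>0$ this is benign: on a neighbourhood of $x$ where $u>0$ we have $u_+\equiv u$, so the two integrands agree in the singular region near $x$ and their difference is an absolutely convergent integral supported on $\{u(y)<0\}$, which is bounded away from $x$ there; the monotonicity argument applies without modification once the p.v.\ limit is taken. The residual set $\{u=0\}$ has to be handled separately, but the bound $\sL_t u_+(x)\geq 0$ of the second case automatically delivers the subsolution inequality for $u_+$ at such points as well, provided $\partial_t u_+(x)=0$ there, which follows from the standard chain-rule identity $\partial_t u_+ = \partial_t u \cdot \mathbf{1}_{\{u>0\}}$ that holds almost everywhere. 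Assembling these three regimes finishes the proof.
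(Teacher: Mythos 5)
Your argument is correct at the level of the pointwise formulation \eqref{PE} that the paper uses to define subsolutions, and the key mechanism — monotonicity of $\tau\mapsto|\tau|^{p-2}\tau$ giving $\sL_t u_+(x)\ge \sL_t u(x)$ on $\{u>0\}$, and nonnegativity of the integrand giving $\sL_t u_+(x)\ge 0$ where $u_+(x)=0$ — is exactly the right one; your handling of the principal value (the difference of integrands is supported away from the singularity on $\{u>0\}$, and is a nonnegative, hence unambiguously defined, integrand where $u_+(x)=0$) is also sound. Note, however, that the paper does not prove this lemma at all: it is quoted from \cite[Lemma 3.1]{St1}, where the statement is established for \emph{weak} subsolutions via the variational form $\mathcal{E}_t$, i.e.\ by testing the weak formulation with suitable nonnegative test functions and a smooth convex approximation of $\tau\mapsto\tau_+$, rather than pointwise. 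The two routes buy different things: yours is shorter and fully elementary, but it implicitly assumes $u$ is regular enough (continuous, differentiable in $t$, with finite tails) for $\partial_t u$ and the pointwise p.v.\ integral to make sense at each point, which is not guaranteed for the local weak solutions the paper actually works with; the weak-formulation proof of \cite{St1} avoids all pointwise considerations (including your separate treatment of the null set $\{u=0\}$ via the a.e.\ chain rule) at the cost of an approximation argument. If one reads \eqref{PE} literally as a pointwise inequality, your proof is complete; if one works in the weak setting of the rest of the paper, the argument should be recast by inserting the inequality $|u_+(y)-u_+(x)|^{p-2}(u_+(y)-u_+(x))\,(\varphi(x)-\varphi(y))\le |u(y)-u(x)|^{p-2}(u(y)-u(x))\,(\varphi(x)-\varphi(y))$-type comparisons into $\mathcal{E}_t$ against test functions supported in $\{u>0\}$, as in \cite{St1}.
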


\subsubsection{Parabolic tail}

To state the local boundedness for subsolutions to the equation \eqref{EQ}, we also need the following nonlocal quantity, which is called the parabolic tail of the function.

\begin{definition}\it Let $p>1$ and $q\in (0,\infty]$.
For any $x_0\in\R^d$, $t_0\in \R$, $r>0$, $T_0>0$ and any function $u\in L^q (t_0-T_0,t_0;W^{s,p}(\R^d))$, the parabolic tail of $u$ with respect to $x_0, r, t_0$ and $T_0$ is defined by
\begin{equation}\label{Tail1}
{\rm Tail}_q
(u; x_0, r, t_0-T_0,t_0)=\left(\frac{1}{T_0}\int_{t_0-T_0}^{t_0}\left(r^{sp}\int_{\R^d\setminus B_{r}(x_0)}\frac{
u_+(t,x)^{p-1
}}{|x-x_0|^{d+sp}}\,dx\right)^{q/(p-1)}\,dt\right)^{1/q},
\end{equation} where $ B_r(x_0)=\{x\in \R^d:|x-x_0|<r\}$.
In particular,
$$
{\rm Tail}_{p-1}(u; x_0, r, t_0-T_0,t_0)=\left(\frac{r^{sp}}{T_0}\int_{t_0-T_0}^{t_0} \int_{\R^d\setminus B_{r}(x_0)}\frac{
u_+(t,x)^{p-1
}}{|x-x_0|^{d+sp}}\,dx \,dt\right)^{1/(p-1)};$$
and the parabolic supremum tail of $u$ with respect to $x_0, r, t_0$ and $T_0$ is given by
\begin{equation}\label{Tail2}
{\rm Tail}_\infty(u; x_0, r, t_0-T_0,t_0)=\left(r^{sp}\sup_{t_0-T_0< t< t_0}\int_{\R^d\setminus B_{r}(x_0)}\frac{
u_+(t,x)^{p-1
}}{|x-x_0|^{d+sp}}\,dx\right)^{1/(p-1)}.
\end{equation}
\end{definition}

\subsection{Main result}
We now state the main result of this paper, which shows the local boundedness of
subsolutions to the parabolic equation associated with fractional $p$-Laplacian type operators.
In the following, set
$$I^\ominus_r(t_0)=(t_0-r^{sp},t_0)
\quad \text{and}\quad  B_r(x_0)=\{x\in \R^d:|x-x_0|<r\}.$$

\begin{theorem}\label{main1} Let $p\in (1,\infty)$, and $u$ be a
subsolution to $\partial_tu-\sL_tu=0$ in $I^\ominus_R(t_0)\times B_R(x_0)$ with $t_0\in \R$, $x_0\in\R^d$ and $R>0$,
Then, the following statement hold.
\begin{itemize}
\item[{\rm (i)}] For any $p\in [2,\infty)$, there is a constant $C_1>0$ so that
\begin{align*}
&\sup_{B_{R/2}(x_0)\times I^\ominus_{R/2}(t_0)}u\\
&\le C_1\left(\oint_{I^\ominus_{R}(t_0)}\oint_{B_{R}(x_0)}u_+(t,x)^{2p-2}\,dx\,dt+\oint_{I^\ominus_{R}(t_0)}\oint_{B_{R}(x_0)}u_+(t,x)^p\,dx\,dt\right)^{{(p-1)}/{p}}\\
&\quad\,\,\,+C_1\left[1+ \left(\oint_{I^\ominus_R(t_0)}\oint_{B_R(x_0)}u_+(t,x)^{p-1}\,dx\,dt\right)^{2-p}\right]^
{(1+d/(sp))/2}\left(\oint_{I^\ominus_R(t_0)}\oint_{ B_R(x_0)}u_+(t,x)^p\,dx\,dt\right)^{1/2}\\
&\quad\,\,\,+
C_1{\rm Tail}_{p-1}^{p-1}(u; x_0,R, t_0-{R}^{sp}, t_0)+
C_1\left({\rm Tail}_{p-1}^{p-1}(u; x_0,R, t_0-R^{sp}, t_0)\right)^{p-1}.
\end{align*}

\item[{\rm (ii)}] For any $p\in (1,2)$ and $\xi>d(2-p)/(sp)$, there is a constant $C_2>0$ so that
\begin{align*}
&\sup_{B_{R/2}(x_0)\times I^\ominus_{R/2}(t_0)}u\\
&\le C_2\left(\oint_{I^\ominus_{R}(t_0)}\oint_{B_{R}(x_0)}u_+(t,x)^{2p-2}\,dx\,dt+\oint_{I^\ominus_{R}(t_0)}\oint_{B_{R}(x_0)}u_+(t,x)^p\,dx\,dt\right)^{1/p}\\
&\quad\,\,\,+C_2\left[1+ \left(\oint_{I^\ominus_R(t_0)}\oint_{B_R(x_0)}u_+(t,x)^{p-1}\,dx\,d t\right)^{(p-2)/(p-1)}\right]^{(1+\beta)/((1+\xi)\theta)}\\
&
\qquad\qquad\times \left(\oint_{I^\ominus_R(t_0)}\oint_{ B_R(x_0)}u_+(t,x)^{1+\xi}\,dx\,dt\right)^{\beta/((1+\xi)\theta)}\\
&\quad\,\,\,+C_2{\rm Tail}_{p-1}^{p-1}(u; x_0,R, t_0-{R}^{sp}, t_0),
\end{align*}
where $\theta=\frac{(p-1+\xi)(1+sp/d)}{1+\xi}-1$ and  $\beta=sp(p-1+\xi)/(d(1+\xi))$.
\end{itemize}
\end{theorem}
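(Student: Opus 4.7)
The plan is a De Giorgi-type level-set iteration on shrinking parabolic cylinders, driven by a Caccioppoli inequality tailored to the fractional $p$-Laplacian type operator $\sL_t$. Fix a level $k>0$ to be chosen at the end, and for $n\ge 0$ set
\[
r_n=\tfrac{R}{2}(1+2^{-n}),\qquad k_n=k(1-2^{-n}),\qquad Q_n=I^\ominus_{r_n}(t_0)\times B_{r_n}(x_0).
\]
By Lemma \ref{Z} we may pass to $u_+$, so assume $u\ge 0$. The objects of iteration are the averages $A_n:=\oint_{Q_n}(u-k_n)_+^{\gamma}\,dx\,dt$ for an exponent $\gamma$ prescribed by the Sobolev embedding; the key point, and the reason for the two separate conclusions, is that the natural choice of $\gamma$ changes with $p$.

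The first step is a Caccioppoli-type inequality. Testing $\partial_t u-\sL_t u\le 0$ against $(u-k_n)_+\psi^p$ for a spatial cutoff $\psi$ adapted to $B_{r_n}(x_0)\supset B_{r_{n+1}}(x_0)$ yields
\[
\sup_{t\in I^\ominus_{r_{n+1}}(t_0)}\!\int_{B_{r_{n+1}}(x_0)}(u-k_n)_+^{2}(t,x)\,dx
+\iint_{Q_{n+1}}[(u-k_n)_+(t,\cdot)]^{p}_{W^{s,p}(B_{r_{n+1}}(x_0))}\,dt
\le \mathcal{R}_n,
\]
where $\mathcal{R}_n$ collects (a) a local cutoff error controlled by averages of $(u-k_n)_+^{p}$ and $(u-k_n)_+^{2p-2}$ on $Q_n$ — these appear through the pointwise identities comparing $|a-b|^{p-2}(a-b)((a-k)_+-(b-k)_+)$ to $|(a-k)_+-(b-k)_+|^{p}$, whose remainder terms behave differently in the regimes $p\ge 2$ and $p<2$ — and (b) a nonlocal tail from interactions with $\R^d\setminus B_{r_n}(x_0)$, to be bounded by $\operatorname{Tail}_{p-1}(u;x_0,R,t_0-R^{sp},t_0)$ and its powers. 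The delicate part here is to package the tail using only the $L^1$-in-time norm appearing in \eqref{Tail1}, rather than the $L^\infty$-in-time norm in \eqref{Tail2}; this is the advertised improvement over earlier work.

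The second step is a parabolic Sobolev/Gagliardo-Nirenberg inequality, interpolating the $L^\infty_t L^2_x$ bound against the $L^p_t W^{s,p}_x$ bound that $\mathcal{R}_n$ provides. For $p\ge 2$ the interpolation closes directly on averages of $(u-k_n)_+^{2p-2}$ and $(u-k_n)_+^{p}$, which explains the first line of conclusion (i). For $p\in(1,2)$ the $L^2_x$ piece is degenerate relative to the $p$-energy and must be demoted via Hölder to $L^{1+\xi}_x$; the hypothesis $\xi>d(2-p)/(sp)$ is exactly what makes the Gagliardo-Nirenberg exponent $\theta=\frac{(p-1+\xi)(1+sp/d)}{1+\xi}-1$ strictly positive, giving a genuine iteration gain. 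In either case the outcome is a recursion of the form $A_{n+1}\le C\,b^{n} k^{-\sigma}A_n^{1+\chi}$ for some $\sigma,\chi>0$ and $b>1$.

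Finally, a standard iteration lemma forces $A_n\to 0$ once $A_0$ is small enough, equivalently once $k$ is a large enough multiple of the right-hand side of (i) or (ii); this gives $u\le k$ on $I^\ominus_{R/2}(t_0)\times B_{R/2}(x_0)$. The auxiliary bracket $[1+(\cdots)^{2-p}]$ in (i), respectively $[1+(\cdots)^{(p-2)/(p-1)}]$ in (ii), reflects the two regimes in which the smallness threshold for $A_0$ is governed either by the $L^p$- / $L^{1+\xi}$-type average or by the prefactor $k^{-\sigma}$, depending on whether the $L^{p-1}$-average of $u_+$ is large or small. The step I expect to be hardest is the Caccioppoli inequality — specifically, extracting the $p\ne 2$ cross-terms without introducing averages other than the three that appear in the statement, and carrying out the tail estimate with the weaker $L^1$-in-time norm rather than the straightforward $L^\infty$-in-time one.
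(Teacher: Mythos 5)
Your outline reproduces the standard De Giorgi skeleton, but it is missing the one idea the whole theorem turns on, and you yourself flag it as the step you cannot yet do: how to control the nonlocal interaction with $\R^d\setminus B_R(x_0)$ using only the $L^1$-in-time quantity ${\rm Tail}_{p-1}$. If you test against $(u-k_n)_+\psi^p$ with \emph{constant} levels $k_n$, the far-field term in the Caccioppoli inequality is
$\int\!\!\int v^{\xi}\phi^p\bigl(\int_{B_R^c}u_+(t,y)^{p-1}|x_0-y|^{-d-sp}dy\bigr)dt\,dx$, and the only way to decouple it from the iteration is to pull out $\sup_t\int_{B_R^c}u_+(t,y)^{p-1}|x_0-y|^{-d-sp}dy$ — which is exactly the $L^\infty$-in-time tail of the earlier papers. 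The paper's mechanism is a \emph{time-dependent} truncation level
$k(t)=C\int_{t_0-(r+\rho)^{sp}}^{t}\int_{B_R(x_0)^c}u_+(s,y)^{p-1}|x_0-y|^{-d-sp}\,dy\,ds+l$: since $\partial_t u$ is tested against $v^{\xi}\phi^p$ with $v=(u-k(t))_+$, the term $-\int\!\!\int v^{\xi}\phi^p k'(t)$ appears with a favourable sign and, for a suitable $C$, cancels the remaining tail exactly (Proposition \ref{Cl}). Only the near-field piece over $B_R\setminus B_{r+\rho}$ survives, and that piece is controlled by $\sup_{t}\oint_{B_R}u_+(t,y)^{p-1}dy$, which in turn requires a \emph{separate} a priori estimate (Proposition \ref{sup} in the paper, proved by a different cutoff argument plus the Giaquinta--Giusti lemma). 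That estimate is where the first line of (i) — the $(p-1)/p$ power of the $u_+^{2p-2}$ and $u_+^{p}$ averages — and the $(\,{\rm Tail}\,)^{p-1}$ term actually come from; they are not produced by the Sobolev interpolation inside the iteration, as your sketch suggests.

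A second, smaller but real, obstruction concerns $p\in(1,2)$. You propose to keep the $L^\infty_tL^2_x$ energy and ``demote'' it to $L^{1+\xi}_x$ by H\"older. On a set of finite measure H\"older only lets you bound the \emph{lower} norm by the higher one, so this works only when $1+\xi\le 2$; but the hypothesis forces $\xi>d(2-p)/(sp)$, which can exceed $1$. The correct move (and the paper's) is to test the equation against $v^{\xi}\phi^p$ directly, which produces $\sup_t\int v^{1+\xi}\phi^p$ as the natural energy term for every $\xi>0$; the Gagliardo--Nirenberg step then uses $w=v^{(p-1+\xi)/p}$ and Jensen's inequality $\sup_t\oint v^{p-1+\xi}\le(\sup_t\oint v^{1+\xi})^{(p-1+\xi)/(1+\xi)}$ (valid precisely because $p<2$). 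So the Caccioppoli inequality you need is the general-$\xi$ one with a monotone level $k(t)$, not the $\xi=1$, constant-level version your proposal starts from.
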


\begin{remark}
 \begin{itemize}
\item[{\rm (i)}] For the equation $\partial_tu-\sL_tu=f$ involving a source term $f\in L^{1,\infty}_{t,x}(I^\ominus_{R}(t_0)\times B_{R}(x_0))$, we observe that if $u$ is a subsolution to $\partial_tu-\sL_tu=f$, then the function
$$v(t,x):=u(t,x)-\int^t_{t_0}\|f(s,\cdot)\|_{L^\infty(B_{R}(x_0))}\,ds$$
is a subsolution to $\partial_tv-\sL_tv=0$. Therefore, one can get the local boundedness of
subsolutions $u$ to the equation $\partial_tu-\sL_tu=f$ by directly applying Theorem \ref{main1}.

\item[{\rm(ii)}] When $p=2$, the assertion in Theorem
    \ref{main1}\,(i) is reduced
     to
$$\sup_{B_{R/2}(x_0)\times I^\ominus_{R/2}(t_0)}u\le C_0\left(\left(\oint_{I^\ominus_{R}(t_0)}\oint_{B_{R}(x_0)}u_+(t,x)^2\,dx\,dt\right)^{1/2}+{\rm Tail}_{1}(u; x_0,R, t_0-{R}^{sp}, t_0)\right),$$ which is consistent with \cite[Theorem 1.8]{KW2} in the linear setting. On the other hand, when $p=2$, one can take $\xi=1$ in Theorem
\ref{main1}\,(ii), and so $\theta=\beta=2s/d$. In particular, in this case Theorem
\ref{main1}\,(ii) is also simplified into the statement above.

\item[{\rm(iii)}]
To the best of our knowledge, quantitative estimates for subsolution to the parabolic equation associated with fractional $p$-Laplacian type operators stated in Theorem \ref{main1} have not appeared in the literature before, and Theorem \ref{main1} is new in the sense that it is reduced into the linear case  when $p=2$
(see Remark (ii) above and \cite[Theorem 1.8]{KW2}).
 Indeed, there is an additional (constant) term in all the known results for general
fractional $p$-Laplacian type with $p>1$, which hinders the corresponding statement into the linear case when $p=2$. Therefore, Theorem \ref{main1}  improves \cite[Theorem 1.1]{BK}, \cite[Theorems 1 and 2]{DZZ} and \cite[Theorem 1.1]{St1}. Moreover, in contrast to \cite[Theorems 1 and 2]{DZZ} and \cite[Theorem 1.1]{St1} (see also \cite[Theorems 1.5 and 1.6]{PZ} and the references therein for developments), we are able to control the local supremum of subsolutions in a ball by a nonlocal parabolic tail which uses the $L^1$-norm in time instead of
the $L^\infty$-norm
as in the quoted papers. Besides, we should emphasize that Theorem \ref{main1} deals with the local boundedness of subsolutions to the parabolic equation associated with fractional $p$-Laplacian type operators  for all $p>1$, while \cite[Theorem 1.1]{St1} is only concerned with $p>2$
and \cite[Theorems 1 and 2]{DZZ} is also split into two cases (with different statements) according to $p\in (1,2]$ and $p\ge2$ but there are some restrict conditions in the case that $p\in (1,2)$.
    \end{itemize}
\end{remark}

\subsection{Overview of
the related literature}
Nonlocal elliptic equations of fractional $p$-Laplacian type operators with $p>1$ (i.e., $-\sL_tu=0$) have received great attention in recent years. The existence of weak solutions to the elliptic counterpart of \eqref{EQ} is considered in  \cite{DKP1} under variational framework, and then the local boundedness and the H\"{o}lder continuity of weak solutions are investigated
there
by the De Giorgi-Nash-Moser theory. Based on the boundedness result, a very interesting nonlocal version of the elliptic Harnack inequality for solutions to the equation \eqref{EQ} is established  in \cite{DKP2}. This
nonlocal elliptic Harnack inequality involves the so-called tail of the negative part of the solution $u$, and does not require the solution to be globally positive. For the equation involving a general source term $f(x,u)$, Cozzi \cite{CM} introduced the fractional De Giorgi classes, and proved the local boundedness and the H\"{o}lder continuity as well as the elliptic Harnack inequality of weak solutions to the equation $-\sL_tu=f$.

Regularity theory of weak solutions to the nonlocal parabolic equations of type \eqref{EQ} seems incomplete. V\'{a}zquez \cite{VJL} provided the existence and the uniqueness of strong solutions to \eqref{EQ} under the assumption that $u=0$ in $\R^d\setminus D$, and also investigated large time behaviors of solutions. The
well-posedness
for strong solutions to the equation \eqref{EQ} subject to the Dirichlet condition or the Neumann condition is considered in \cite{MRT}, where asymptotic behaviours of strong solution are also studied. Str\"{o}mqvist \cite{St1} investigated the equation \eqref{EQ} with $u=g$ in $\R^d\setminus D$, and obtained the
well-posedness
and the local boundedness of weak solutions under the assumption that the associated kernel  $J(t;x,y)$ satisfies \eqref{J0} and $p\ge2$.  
Byun and Kim \cite{BK} obtained the local boundedness of subsolutions to the equation \eqref{EQ} in terms of the nonlocal parabolic tail with the $L^q$-norm in time for $q>p-1$. 
For the equation involving a general source term $f(t,x,u)$, Ding et al.\,\cite{DZZ} imposed some structural conditions on the function $f$ such that the local boundedness holds for local weak solutions to $\partial_tu-\sL_tu=f$ with all $p>1$. Furthermore, they also obtained in \cite{DZZ} the H\"{o}lder continuity of bounded solutions when $p>2$. In particular, results in \cite{DZZ}
used the nonlocal supremum parabolic tail. More recently, in the linear case $p=2$, Kassmann and Weidner \cite{KW2} proved the parabolic Harnack inequality and the H\"{o}lder continuity within the variational framework for weak solutions to linear parabolic nonlocal equations, where the nonlocal parabolic tail with the $L^1$-norm in time is adopted.
As far as we know, there is no theory yet for the local boundedness of subsolutions to the nonlinear and nonlocal equation of type \eqref{EQ} in terms of the nonlocal parabolic tail with the $L^q$-norm in time with some proper $q>0$. Thus, the purpose of this work is to fill this gap for weak solutions to \eqref{EQ} with all $p>1$. We believe that the result of our paper would be a basis for further study of regularity theory (including the H\"{o}lder continuity) of weak solutions to the equation of the type \eqref{EQ};
see \cite{CKW1,CKW2, KW2, St2} and the references therein for the linear case $p=2$.

\smallskip

The proof of Theorem \ref{main1} is based on a decomposition of the nonlocal supremum parabolic tail into an intermediate tail and a remaining tail, which is carried out by a
cut-off procedure on the solution. The idea is motivated by the recent work \cite{KW2}, where the local boundedness with the $L^1$-tail was established for weak solutions to linear parabolic nonlocal equations.
In details, for the first part (intermediate tail) in the decomposition, we are able to make use of the equation, which allows us to estimate it by an
$L^{p-1}$-tail (see Proposition \ref{sup}); while for the second part (remaining tail),
via a new
level set truncation in the De Giorgi-Nash-Moser iteration, we can directly profit from a cancellation of the tail terms (see Proposition \ref{Cl}) in the iteration scheme. For this purpose, we need to prove the Caccioppoli-type inequality with the new form, which is proper to
fractional
$p$-Laplacian type operators with all $p>1$, see Lemma \ref{2}.
Moreover, to realize the De Giorgi-Nash-Moser iteration for the proof of Theorem \ref{main1}, we will apply Sobolev inequalities with different type for the fractional $p$-Laplace operator according to $p\in [2,\infty)$ and $p\in (1,2)$. In particular, for $p\in (1,2)$, some efforts including the choices of iteration orders and the constants involved in the argument are required; see Part (2) in the proof of Theorem \ref{main1} for the details.

 \ \

The paper is arranged as follows. In Section \ref{section2} we give some preparations for the proof. In particular, we
first present the Caccioppoli-type inequality for the nonlocal parabolic operator with
fractional
$p$-Laplacian type for all $p>1$, and then establish an inequality for the estimate of the $L^\infty$-tail of subsolutions in terms of the $L^{p-1}$-tail.
Section \ref{section3} is devoted to proving Theorem \ref{main1} with the
the
aid of the De Giorgi-Nash-Moser iteration.
The proof here is split into two cases according to $p\in [2,\infty)$ and $p\in (1,2)$. We also highlight the differences of the proof for these two cases.
\section{Preparations}\label{section2}

To establish the local boundedness of
subsolutions associated with the operator $\sL_t$, we need the following nonlocal Caccioppoli-type inequality.
We should stress that though the proof of the lemma below is partly motivated by \cite[Lemma 3.2]{St1}, some nontrivial modifications are required in order to make sure that it can be efficiently used later for
fractional
 $p$-Laplacian type operators with all $p>1$.

\begin{lemma}\label{2}{\bf (Caccioppoli-type inequality)}\,\,
Let $p\in(1,\infty)$ and
$\xi>0$. Let  $x_0\in \R^d$, $r>0$, $\tau_1<\tau_2$ and $h>0$, and set $B_r:=B_r(x_0)$. Assume that $u$ is a non-negative subsolution on $B_{r}\times(\tau_1-h,\tau_2)$. Let $k$ be a non-negative and differentiable function on $(\tau_1-h,\tau_2)$ such that $k'\ge0$ on $(\tau_1-h,\tau_2)$,
 $v(t,x)=(u(t,x)-k(t))_+$ and $w=v^{(p-1+\xi)/p}$. There exists a constant $C_1>0$ $($which depends on $p$ and $\xi$ only$)$, such that for every non-negative function $\phi(t,x)$ which is compactly supported in $B_r$ for any fixed $t\in(\tau_1-h,\tau_2)$ and is differentiable with respect to $t$ for any fixed $x\in B_r$, it holds that
\begin{align*}
&\int_{\tau_1-h}^{\tau_2}\int_{B_r\times B_r}|w\phi(t,x)-w\phi(t,y)|^p J(t;x,y)\,dx\,dy\,dt\\
&+\int_{\tau_1-h}^{\tau_2}\int_{B_r\times B_r}|w(t,x)-w(t,y)|^p\phi(t,x)^p J(t;x,y)\,dx\,dy\,dt+\frac{1}{1+\xi}\sup_{ 
\tau_1-h<t<\tau_2}\int_{B_{r}}v(t,x)^{1+\xi}\phi(t,x)^p\,dx\\
&\le C_1\int_{\tau_1-h}^{\tau_2}\int_{B_r}v(t,x)^{\xi} \phi(t,x)^p\left(\int_{B_r^c}v(t,y)^{p-1}J(t;x,y)\,dy\right)\,dx\,dt\\
&\quad+C_1\int_{\tau_1-h}^{\tau_2}\int_{B_r\times B_r}\max\{w(t,x),w(t,y )\}^p|\phi(t,x)-\phi(t,y)|^pJ(t;x,y)\,dx\,dy\,dt\\
&\quad+
\frac{2}{(1+\xi)}\int_{\tau_1-h}^{\tau_2}\int_{B_{r}} |\partial_t\phi(t,x)^{p}|v(t,x)^{1+\xi}\,dx\,dt+
\frac{2}{(1+\xi)}\int_{B_{r}}v(\tau_1-h,x)^{1+\xi}\phi(\tau_1-h,x)^p\,dx\\
&\quad-\int_{\tau_1-h}^{\tau_2}\int_{B_{r}} v(t,x)^\xi\phi(t,x)^pk'(t)\,dx\,dt.
\end{align*}
\end{lemma}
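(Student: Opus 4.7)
The plan is to test the weak form of the subsolution inequality $\partial_t u-\sL_t u\leq 0$ against the non-negative test function $\psi(t,x)=v(t,x)^\xi\phi(t,x)^p$, which is admissible since $\phi(t,\cdot)$ is compactly supported in $B_r$ and $v=(u-k)_+\geq 0$. For any $t^*\in(\tau_1-h,\tau_2)$ this produces
$$
\int_{\tau_1-h}^{t^*}\!\int_{B_r}\partial_t u\cdot v^\xi\phi^p\,dx\,dt+\int_{\tau_1-h}^{t^*}\mathcal{E}_t(u,v^\xi\phi^p)\,dt\leq 0.
$$
I would estimate the two terms separately, rearrange, and then apply the standard $\sup$-trick (writing the inequality once with $t^*=\tau_2$ and once with the $t^*$ that maximises the boundary term, and summing) to extract the supremum-in-time term on the LHS of the lemma, thereby producing the factor $2$ visible in front of two of the RHS terms.

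For the time-derivative contribution, on $\{u>k(t)\}$ one has $\partial_t u=\partial_t v+k'(t)$ while $v\equiv 0$ on $\{u\leq k(t)\}$, so $\partial_t u\cdot v^\xi=\tfrac{1}{1+\xi}\partial_t(v^{1+\xi})+v^\xi k'(t)$. Integrating by parts in time against $\phi^p$ gives the boundary values $\tfrac{1}{1+\xi}\int v^{1+\xi}\phi^p$ at $t^*$ and $\tau_1-h$, the interior term $-\tfrac{1}{1+\xi}\iint v^{1+\xi}\partial_t(\phi^p)$, and the remaining $k'$-term; each of these appears (after the $\sup$-trick) on the corresponding side of the claimed inequality.

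For the nonlocal form $\mathcal{E}_t(u,\psi)$ I would symmetrise in $x,y$ using $J(t;x,y)=J(t;y,x)$ and decompose
$$
2\mathcal{E}_t(u,\psi)=\iint_{B_r\times B_r}(\cdots)+2\iint_{B_r\times B_r^c}(\cdots),
$$
the $B_r^c\times B_r^c$ piece vanishing because $\psi$ is supported in $B_r$. On the off-diagonal piece $\psi(t,y)=0$ for $y\in B_r^c$, so after discarding the favourable non-negative contribution coming from $(u(t,x)-u(t,y))_+^{p-1}$ and invoking the elementary bound $(u(t,y)-u(t,x))_+\leq v(t,y)$ on $\{v(t,x)>0\}$ (verified by splitting $u(t,y)\geq k(t)$ vs.\ $u(t,y)<k(t)$), I obtain exactly the tail contribution $\int_{B_r^c}v(t,y)^{p-1}J(t;x,y)\,dy$ tested against $v(t,x)^\xi\phi(t,x)^p$.

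The main obstacle is the interior contribution on $B_r\times B_r$, which requires a pointwise algebraic inequality of the schematic form
\begin{align*}
&|a-b|^{p-2}(a-b)\bigl(\tilde a^\xi\eta_1^p-\tilde b^\xi\eta_2^p\bigr)\\
&\geq c_1\bigl|\tilde a^{(p-1+\xi)/p}\eta_1-\tilde b^{(p-1+\xi)/p}\eta_2\bigr|^p+c_2\bigl|\tilde a^{(p-1+\xi)/p}-\tilde b^{(p-1+\xi)/p}\bigr|^p(\eta_1\wedge\eta_2)^p\\
&\quad -C\max\{\tilde a^{(p-1+\xi)/p},\tilde b^{(p-1+\xi)/p}\}^p|\eta_1-\eta_2|^p,
\end{align*}
valid for all $a,b\geq 0$, $\eta_1,\eta_2\geq 0$, $k\geq 0$ with $\tilde a=(a-k)_+$, $\tilde b=(b-k)_+$, and positive constants $c_1,c_2,C$ depending only on $p,\xi$. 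Applied with $a=u(t,x)$, $b=u(t,y)$, $\eta_1=\phi(t,x)$, $\eta_2=\phi(t,y)$, integrated against $J$ and symmetrised, this simultaneously delivers both gradient terms $\iint|w\phi(t,x)-w\phi(t,y)|^pJ$ and $\iint|w(t,x)-w(t,y)|^p\phi(t,x)^pJ$ on the LHS, as well as the cut-off error $\max\{w(x),w(y)\}^p|\phi(x)-\phi(y)|^pJ$ on the RHS. Verifying such an inequality simultaneously for all $p>1$ is the heart of the matter and the place where Str\"omqvist's $p\geq 2$ argument breaks down: in the subquadratic regime $p\in(1,2)$ the standard monotonicity estimates for $t\mapsto|t|^{p-2}t$ become degenerate, and one must split according to whether $|a-b|$ is small or large compared to $\max(\tilde a,\tilde b)$ and use a weighted Young inequality to recover both gradient terms at once -- this is precisely the ``nontrivial modification'' over \cite[Lemma 3.2]{St1} alluded to above.
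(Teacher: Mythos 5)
Your architecture coincides with the paper's: the same test function $v^{\xi}\phi^{p}$ (the paper writes $v^{1-q}\phi^p$ with $q=1-\xi$), the same three-way decomposition into the $B_r\times B_r$ part, the $B_r\times B_r^c$ part and the time-derivative part, the same identity $\partial_t u\cdot v^{\xi}=\tfrac{1}{1+\xi}\partial_t(v^{1+\xi})+v^{\xi}k'(t)$, the same bound $(u(t,y)-u(t,x))_+\le v(t,y)$ for the tail, and the same sup-in-time trick producing the factor $2$. So the route is not different from the paper's; the issue is that the one step you flag as ``the heart of the matter'' is exactly the step the paper spends most of its proof on, and you leave it as an asserted schematic inequality. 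Before that inequality can even be attacked, one needs the reduction
$|u(x)-u(y)|^{p-2}(u(x)-u(y))\bigl(v(x)^{\xi}\phi(x)^p-v(y)^{\xi}\phi(y)^p\bigr)\ge (v(x)-v(y))^{p-1}\bigl(v(x)^{\xi}\phi(x)^p-v(y)^{\xi}\phi(y)^p\bigr)$ (for $u(x)\ge u(y)$), which requires a three-case analysis on the positions of $u(x),u(y)$ relative to $k(t)$; you carry out the analogous step only for the off-diagonal piece. The paper then inserts the cut-off inequality $\phi(y)^p\le(1+c_p\varepsilon)\phi(x)^p+(1+c_p\varepsilon)\varepsilon^{1-p}|\phi(x)-\phi(y)|^p$ with the specific choice $\varepsilon=\delta(v(x)-v(y))/v(x)$, and reduces matters to lower bounds for $g(a)=(a^{1-q}-1)/(a-1)$ in the two regimes $a=v(x)/v(y)\ge 2$ and $1\le a<2$, finally converting $(v(x)-v(y))^p/v(y)^q$ into $(w(x)-w(y))^p$ via $w(x)-w(y)=\tfrac{p-q}{p}\int_{v(y)}^{v(x)}\tau^{-q/p}\,d\tau$. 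None of this is reproduced or replaced in your proposal, so the lemma is not proved.

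A secondary point: your diagnosis of where the difficulty sits is somewhat off. In the paper the case split is on the ratio $v(x)/v(y)$ (your ``$|a-b|$ small or large compared to $\max$'' is the right split), but the delicacy is not a degeneracy of $t\mapsto|t|^{p-2}t$ for $p<2$ --- that map never reappears after the reduction to $(v(x)-v(y))^{p-1}$. The real point is making the argument work for an \emph{arbitrary} exponent $\xi>0$ in the test function, i.e.\ for $q=1-\xi$ possibly negative or in $[0,1)$, since Str\"omqvist's version is tied to $\xi=p-1$ and hence to $p\ge2$; the freedom to take small $\xi$ is what the iteration for $p\in(1,2)$ later exploits. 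So the ``nontrivial modification'' is in the $\xi$-dependence of the algebraic lemma, and your sketch of a ``weighted Young inequality'' does not engage with that dependence.
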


\begin{proof}
Let
$q=1-\xi<1$. Let $t_1=\tau_1-h$ and $t_2\in(t_1,\tau_2]$ to be determined later. Using the fact that $u(t,x)$ is a subsolution on $B_{r}\times(\tau_1-h,\tau_2)$ and
multiplying $v(t,x)^{1-q}\phi(t,x)^p$ in both sides of \eqref{PE},  we find
\begin{align*}0
&\ge \int_{t_1}^{t_2}\int_{B_{r}} -\sL_t u(t,x)v(t,x)^{1-q}\phi(t,x)^p\,dx\,dt +\int_{t_1}^{t_2}\int_{B_r}v(t,x)^{1-q}\phi(t,x)^p \partial_tu(t,x)\,dx\,dt\\
&=\frac{1}{2}\int_{t_1}^{t_2}\int_{B_{r}\times B_{r}} |u(t,x)-u(t,y)|^{p-2} (u(t,x)-u(t,y))(v(t,x)^{1-q}\phi(t,x)^p- v(t,y)^{1-q}\phi(t,y)^p) J(t;x,y)\,dx\,dy\,dt\\
&\quad +\int_{t_1}^{t_2}\int_{B_{r}\times B^c_{r}}|u(t,x)-u(t,y)|^{p-2} (u(t,x)-u(t,y))v(t,x)^{1-q}\phi(t,x)^p J(t;x,y)\,dx\,dy\,dt\\
&\quad +\int_{t_1}^{t_2}\int_{B_{r}}v(t,x)^{1-q}\phi(t,x)^p \partial_tu(t,x)\,dx\,dt\\
&=:I_1+I_2+I_3.
\end{align*}

For $I_3$, since
$$v(t,x)^{1-q}\phi(t,x)^p \partial_tu(t,x)=\frac{1}{2-q}\phi(t,x)^p\partial_tv(t,x)^{2-q}+v(t,x)^{1-q}\phi(t,x)^pk'(t),$$
we obtain
\begin{align*}
I_3&=\frac{1}{2-q}\int_{t_1}^{t_2}\int_{B_{r}}\phi(t,x)^p\partial_tv(t,x)^{2-q}\,dx\,dt+\int_{t_1}^{t_2}\int_{B_{r}}
v(t,x)^{1-q}\phi(t,x)^pk'(t)\,dx\,dt\\
&=\frac{1}{2-q}\int_{B_{r}} v(t_2,x)^{2-q}\phi(t_2,x)^p\,dx-\frac{1}{2-q}\int_{B_{r}} v(t_1,x)^{2-q}\phi(t_1,x)^p\,dx\\
&\quad-\frac{1}{2-q}\int_{t_1}^{t_2}\int_{B_{r}}\partial_t\phi(t,x)^{p}v(t,x)^{2-q }\,dx\,dt+\int_{t_1}^{t_2}\int_{B_{r}}
v(t,x)^{1-q}\phi(t,x)^pk'(t)\,dx\,dt.
\end{align*}

For $I_2$, noting that
\begin{align*}|u(t,x)-u(t,y)|^{p-2} (u(t,x)-u(t,y))  v(t,x)^{1-q}\ge
& -(u(t,y)-u(t,x))_+^{p-1}
v(t,x)
^{1-q}\\
\ge &-(u(t,y)-k(t))_+^{p-1}
v(t,x)^{1-q}\\
= &-v(t,y)^{p-1}v(t,x)^{1-q},
\end{align*} we
obtain
\begin{align*}
I_{2}\ge &-\int_{t_1}^{t_2}\int_{B_r}v(t,x)^{1-q} \phi(t,x)^p\left(\int_{B_r^c}v(t,y)^{p-1}J(t;x,y)\,dy\right)\,dx\,dt.
\end{align*}

For $I_{1}$ we consider the integral involved in. Without loss of generality we may and do assume that $u(t,x)\ge u(t,y)$; otherwise we just exchange the roles of $x$ and $y$. It holds that
\begin{align*} &|u(t,x)-u(t,y)|^{p-2} (u(t,x)-u(t,y))( v(t,x)^{1-q} \phi(t,x)^p- v(t,y)^{1-q} \phi(t,y)^p)\\
&=(u(t,x)-u(t,y))^{p-1}( v(t,x)^{1-q} \phi(t,x)^p- v(t,y)^{1-q} \phi(t,y)^p)\\
&=\begin{cases}(v(t,x)-v(t,y))^{p-1}( v(t,x)^{1-q} \phi(t,x)^p- v(t,y)^{1-q} \phi(t,y)^p),&\quad u(t,x),u(t,y)>k(t),\\
(u(t,x)-u(t,y))^{p-1}v(t,x)^{1-q}\phi(t,x)^p,&\quad u(t,x)>k(t)\ge u(t,y),\\
0,&\quad k(t)\ge u(t,x)\end{cases}\\
&\ge (v(t,x)-v(t,y))^{p-1}(v(t,x)^{1-q}\phi(t,x)^p-v(t,y)^{1-q}\phi(t,y)^p).
\end{align*}
Recall from \cite[Lemma 3.1]{DKP1} that for any $p>1$ and $\varepsilon\in (0,1]$,
$$\phi(t,y)^p\le \phi(t,x)^p+c_p\varepsilon\phi(t,x)^p+(1+c_p\varepsilon)\varepsilon^{1-p}|\phi(t,x)-\phi(t,y)|^p$$
holds
with $c_p=(p-1)\Gamma(1\vee(p-2))$. Set
$$\varepsilon=\delta\frac{v(t,x)-v(t,y)}{v(t,x)},$$where $\delta\in(0,1)$ will be chosen later.
Then,
\begin{align*}
&(v(t,x)-v(t,y))^{p-1}\left(\frac{\phi(t,x)^p}{v(t,x)^{q-1}}-\frac{\phi(t,y)^p}{v(t,y)^{q-1}}\right)\\
&=(v(t,x)-v(t,y))^{p-1}\left(\frac{\phi(t,x)^p}{v(t,x)^{q-1}}-\frac{\phi(t,x)^p}{v(t,y)^{q-1}}+\frac{\phi(t,x)^p-\phi(t,y)^p}{v(t,y)^{q-1}}\right)\\
&\geq(v(t,x)-v(t,y))^{p-1}\left(\frac{\phi(t,x)^p}{v(t,x)^{q-1}}-\frac{\phi(t,x)^p}{v(t,y)^{q-1}}\left(1+c_p\delta\frac{v(t,x)-v(t,y)}{v(t,x)}\right)\right)\\
&\quad-\frac{(v(t,x)-v(t,y))^{p-1}}{v(t,y)^{q-1}}\left(\left(1+c_p\delta\frac{v(t,x)-v(t,y)}{v(t,x)}\right)\delta^{1-p}\frac{(v(t,x)-v(t,y))^{1-p}}{v(t,x)^{1-p}}\right)\\
&\quad\quad\times|\phi(t,x)-\phi(t,y)|^p\\
&=:A+B.
\end{align*}

We further estimate $A$ as follows
\begin{equation}\label{V}\begin{split}
&A=(v(t,x)-v(t,y))^{p-1}\frac{\phi(t,x)^p}{v(t,y)^{q-1}}\left(\frac{v(t,y)^{q-1}}{v(t,x)^{q-1}}-1-c_p\delta\frac{v(t,x)-v(t,y)}{v(t,x)}\right)\\
&\quad=(v(t,x)-v(t,y))^{p-1}\frac{\phi(t,x)^p}{v(t,y)^{q}}(v(t,x)-v(t,y))\\
&\qquad\times\left(\frac{v(t,y)^{q}}{v(t,x)^{q-1}(v(t,x)-v(t,y))}-\frac{v(t,y)}{v(t,x)-v(t,y)}-c_p\delta\frac{v(t,y)}{v(t,x)}\right)\\
&\quad=(v(t,x)-v(t,y))^p\frac{\phi(t,x)^p}{v(t,y)^{q}}\left(g\left(\frac{v(t,x)}{v(t,y)}\right)-c_p\delta\frac{v(t,y)}{v(t,x)}\right),
\end{split}\end{equation}
where
$$g(a)=\frac{a^{1-q}-1}{a-1},\quad a\ge 1.$$
Note that $\xi>0$, so $q<1$. Below we will split
$v(t,x)\ge v(t,y)$ into the following two cases.

\textbf {Case 1: $v(t,x)\ge2v(t,y)$}. If $a\ge2$, then
\begin{equation}\label{B}
g(a)=\frac{a^{1-q}-1}{a-1}\ge\frac{(1-2^{q-1})a^{1-q}}{a-1},
\end{equation} thanks to the fact that $1-q>0$.
Thus, according to \eqref{V} and \eqref{B},
\begin{align*}
&A\ge(v(t,x)-v(t,y))^p\frac{\phi(t,x)^p}{v(t,y)^{q}}\left(\frac{(1-2^{q-1})\frac{v(t,y)^q}{v(t,x)^{q-1}}}{v(t,x)-v(t,y)}-c_p\delta\frac{v(t,y)}{v(t,x)}\right)\\
&\quad=\frac{(v(t,x)-v(t,y))^{p-1}}{v(t,x)^{q-1}}\phi(t,x)^p\left(1-2^{q-1}-c_p\delta\frac{v(t,y)}{v(t,x)}\frac{v(t,x)-v(t,y)}{v(t,y)^q}v(t,x)^{q-1}\right).\\
\end{align*}
Furthermore, note that, due to $v(t,x)\ge2v(t,y)$ and $q<1$,
$$-\frac{v(t,y)}{v(t,x)}\frac{v(t,x)-v(t,y)}{v(t,y)^q}v(t,x)^{q-1}\ge -\left(\frac{v(t,y)}{v(t,x)}\right)^{1-q}\ge-\left(\frac{1}{2}\right)^{1-q}=-2^{q-1}.$$
Thus
\begin{align*}A\ge&\frac{(v(t,x)-v(t,y))^{p-1}}{v(t,x)^{q-1}}\phi(t,x)^p\left(1-2^{q-1}-c_p\delta2^{q-1}\right)\\
\ge&2^{1-p}v(t,x)^{p-q}\phi(t,x)^p\left(1-2^{q-1}-c_p\delta2^{q-1}\right)\\
\ge&2^{1-p}\left(v(t,x)^{\frac{p-q}{p}}-v(t,y)^{\frac{p-q}{p}}\right)^p\phi(t,x)^p\left(1-2^{q-1}-c_p\delta2^{q-1}\right).\end{align*}
Take $\delta>0$ small enough so that $1-2^{q-1}\ge 2c_p\delta2^{q-1}$, we
obtain
\begin{align*}
&A\ge2^{-p}(1-2^{q-1})\left(v(t,x)^{\frac{p-q}{p}}-v(t,y)^{\frac{p-q}{p}}\right)^p\phi(t,x)^p\\
&\quad=2^{-p}(1-2^{q-1})(w(t,x)-w(t,y))^p\phi(t,x)^p,
\end{align*} where
$w=v^{(p-1+\xi)/p}=v^{(p-q)/p}$.

\textbf {Case 2: $v(t,y)\le v(t,x)<2v(t,y)$}. If $1\le a<2$, then
$$(a^{1-q}-1)(a^q+a^{q-(1-q)}+\cdots+a^{q-(m-1)(1-q)}+1)\ge a-1.$$ where
$m:=m(q)$ is the minimum of the natural number such that $m(1-q)\ge q.$ Note that $m$ only depends on $q$ (and so on $\xi$). The inequality above along with $1-q>0$ and $m\ge1$ implies that for all $1\le a<2$,
\begin{align*}g(a)=\frac{a^{1-q}-1}{ a-1}\ge& \frac{1}{a^q+a^{q-(1-q)}+\cdots+a^{q-(m-1)(1-q)}+1}\ge \frac{1}{ma^q+1}\\
\ge&\begin{cases}\frac{1}{m+1}&\quad q<0\\
\frac{1}{m2^q+1}&\quad 0\le q<1\end{cases}\\
\ge&\frac{1}{m2^{q\vee0}+1}.
\end{align*}
Take $\delta>0$ small enough so that $\frac{1}{m2^{q\vee0}+1}\ge 2c_p \delta $. By \eqref{V}, the fact that $g(a)\ge\frac{1}{m2^{q\vee0}+1}$ for all $1\le a<2$ and the choice of $\delta$, we have
\begin{align*}A\ge&\frac{\phi(t,x)^p}{v(t,y)^q}(v(t,x)-v(t,y))^p\left(\frac{1}{m2^{q\vee0}+1}-
c_p \delta\right)\\
\ge&\frac{1}{2(m2^{q\vee0}+1)}\frac{\phi(t,x)^p}{v(t,y)^q}(v(t,x)-v(t,y))^p.\end{align*}
On the other hand,
recalling that $w=v^{(p-1+\xi)/p}=v^{(p-q)/p}$,
\begin{align*}
(w(t,x)-w(t,y))^p&=
\left(\frac{p}{p-q}\right)^{-p}
\left(\int^{v(t,x)}_{v(t,y)}\tau^{-q/p}d\tau\right)^p\\
&\le\begin{cases}\left(\frac{p}{p-q}
\right)^{-p}
\frac{(v(t,x)-v(t,y))^p}{v(t,x)^q}\le \left(\frac{p}{p-q}
\right)^{-p}
2^{-q}\frac{(v(t,x)-v(t,y))^p}{v(t,y)^q}&\quad q<0\\
\left(\frac{p}{p-q}
\right)^{-p}
\frac{(v(t,x)-v(t,y))^p}{v(t,y)^q}&\quad 0\le q<1\end{cases}\\
&\le \left(\frac{p}{p-q}
\right)^{-p}
(2^{-q}\vee1)\frac{(v(t,x)-v(t,y))^p}{v(t,y)^q},
\end{align*} where in the first inequality we used the fact that $v(t,y)\le v(t,x)<2v(t,y)$
which is the
assumption in this case.
Thus,
\begin{align*}
A&\ge\frac{1}{2(m2^{q\vee0}+1)}\left(\frac{p}{p-q}
\right)^{p}
(2^{q}\wedge1)\phi(t,x)^p(w(t,x)-w(t,y))^p.
\end{align*}
Combining Case 1 with Case 2,
we show that there are $\delta>0$ small enough and $c_1>0$ so that
\begin{equation}\label{N}
A\ge c_1(w(t,x)-w(t,y))^p\phi(t,x)^p.
\end{equation}

According to the facts that
$$-v(t,y)^{1-q}\ge-v(t,x)^{1-q}, \quad (v(t,x)-v(t,y))/v(t,x)\le1,$$
we find
\begin{equation}\label{M}
B\ge-c_2w(t,x)^p|\phi(t,x)-\phi(t,y)|^p.
\end{equation}

Finally, putting both estimates for $A$ and $B$ together, we find that for $v(t,x)\ge v(t,y)$,
$$A+B\ge c_1(w(t,x)-w(t,y))^p\phi(t,x)^p-c_2w(t,x)^p|\phi(t,x)-\phi(t,y)|^p.$$
By applying the
elementary
inequality $(a+b)^{p}\le 2^{p-1}(a^p+b^p)$ for all $a,b\ge0$,
\begin{align*}
&|(w(t,x)\phi(t,x)-w(t,y)\phi(t,y)|^p-2^{p-1}\max\{w(t,x),w(t,y )\}^p|\phi(t,x)-\phi(t,y)|^p\\
&\le2^{p-1}|w(t,x)-w(t,y)|^p\phi(t,x)^p.
\end{align*}
Therefore,
there are positive constants $c_3$ and $c_4$ such that
\begin{equation}\label{F}\begin{split}
I_1\ge
&c_3\int_{t_1}^{t_2}\int_{B_r\times B_r}|w\phi(t,x)-w\phi(t,y)|^p J(t;x,y)\,dx\,dy\,dt\\
&+c_3\int_{t_1}^{t_2}\int_{B_r\times B_r}|w(t,x)-w(t,y)|^p\phi(t,x)^p J(t;x,y)\,dx\,dy\,dt\\
&-c_4\int_{t_1}^{t_2}\int_{B_r\times B_r}\max\{w(t,x),w(t,y )\}^p|\phi(t,x)-\phi(t,y)|^pJ(t;x,y)\,dx\,dy\,dt.
\end{split}\end{equation}

According to all the estimates above, we find that
\begin{align*}
&\int_{t_1}^{t_2}\int_{B_r\times B_r}|w\phi(t,x)-w\phi(t,y)|^p J(t;x,y)\,dx\,dy\,dt\\
&+\int_{t_1}^{t_2}\int_{B_r\times B_r}|w(t,x)-w(t,y)|^p\phi(t,x)^p J(t;x,y)\,dx\,dy\,dt+\frac{1}{2-q}\int_{B_{r}} v(t_2,x)^{2-q}\phi(t_2,x)^p\,dx\\
&\le c_5\int_{t_1}^{t_2}\int_{B_r}v(t,x)^{1-q} \phi(t,x)^p\left(\int_{B_r^c}v(t,y)^{p-1}J(t;x,y)\,dy\right)\,dx\,dt\\
&\quad+c_5\int_{t_1}^{t_2}\int_{B_r\times B_r}\max\{w(t,x),w(t,y )\}^p|\phi(t,x)-\phi(t,y)|^pJ(t;x,y)\,dx\,dy\,dt\\
&\quad+\frac{1}{2-q}\int_{t_1}^{t_2}\int_{B_{r}}|\partial_t\phi(t,x)^{p}|v(t,x)^{2-q }\,dx\,dt+\frac{1}{2-q}\int_{B_{r}} v(t_1,x)^{2-q}\phi(t_1,x)^p\,dx\\
&\quad-\int_{t_1}^{t_2}\int_{B_{r}}
v(t,x)^{1-q}\phi(t,x)^pk'(t)\,dx\,dt.
\end{align*}
That is,
\begin{align*}&\int_{t_1}^{t_2}\int_{B_r\times B_r}|w\phi(t,x)-w\phi(t,y)|^p J(t;x,y)\,dx\,dy\,dt\\
&+\int_{t_1}^{t_2}\int_{B_r\times B_r}|w(t,x)-w(t,y)|^p\phi(t,x)^p J(t;x,y)\,dx\,dy\,dt\\
&+\int_{t_1}^{t_2}\int_{B_{r}}
v(t,x)^{1-q}\phi(t,x)^pk'(t)\,dx\,dt+\frac{1}{2-q}\int_{B_{r}} v(t_2,x)^{2-q}\phi(t_2,x)^p\,dx\\
&\le c_5\int_{t_1}^{t_2}\int_{B_r}v(t,x)^{1-q} \phi(t,x)^p\left(\int_{B_r^c}v(t,y)^{p-1}J(t;x,y)\,dy\right)\,dx\,dt\\
&\quad+c_5\int_{t_1}^{t_2}\int_{B_r\times B_r}\max\{w(t,x),w(t,y )\}^p|\phi(t,x)-\phi(t,y)|^pJ(t;x,y)\,dx\,dy\,dt\\
&\quad+\frac{1}{2-q}\int_{t_1}^{t_2}\int_{B_{r}}|\partial_t\phi(t,x)^{p}|v(t,x)^{2-q }\,dx\,dt+\frac{1}{2-q}\int_{B_{r}} v(t_1,x)^{2-q}\phi(t_1,x)^p\,dx.\end{align*}
Since $k'(t)\ge0$ for all $t\in (t_1,t_2)$, all the terms in the inequality are non-negative. Hence,
\begin{align*}&\int_{t_1}^{\tau_2}\int_{B_r\times B_r}|w\phi(t,x)-w\phi(t,y)|^p J(t;x,y)\,dx\,dy\,dt\\
&+\int_{t_1}^{\tau_2}\int_{B_r\times B_r}|w(t,x)-w(t,y)|^p\phi(t,x)^p J(t;x,y)\,dx\,dy\,dt\\
&+\int_{t_1}^{\tau_2}\int_{B_{r}}
v(t,x)^{1-q}\phi(t,x)^pk'(t)\,dx\,dt+\sup_{t_2\in ( 
t_1,\tau_2)}\frac{1}{2-q}\int_{B_{r}} v(t_2,x)^{2-q}\phi(t_2,x)^p\,dx\\
&\le 2c_5\int_{t_1}^{\tau_2}
\int_{B_r}v(t,x)^{1-q} \phi(t,x)^p\left(\int_{B_r^c}v(t,y)^{p-1}J(t;x,y)\,dy\right)\,dx\,dt\\
&\quad+2c_5\int_{t_1}^{\tau_2}
\int_{B_r\times B_r}\max\{w(t,x),w(t,y )\}^p|\phi(t,x)-\phi(t,y)|^pJ(t;x,y)\,dx\,dy\,dt\\
&\quad+\frac{2}{2-q}\int_{t_1}^{\tau_2}
\int_{B_{r}}|\partial_t\phi(t,x)^{p}|v(t,x)^{2-q }\,dx\,dt+\frac{2}{2-q}\int_{B_{r}} v(t_1,x)^{2-q}\phi(t_1,x)^p\,dx.\end{align*} The proof is complete.
\end{proof}

Before moving further, we need two lemmas.
First, we give an extension of \cite[Lemma 1.1]{GG}.
\begin{lemma}\label{EX}
Assume that there are $A_1$, $A_2$, $B$, $\gamma_1$, $\gamma_2>0$ and $\kappa\in(0,1)$ such that for any $R>0$ and any bounded function $f : [R/2,R]\rightarrow[0,\infty)$,
$$f(r)\le A_1(s-r)^{-\gamma_1}+A_2(s-r)^{-\gamma_2}+B+\kappa f(s),\quad R/2\le r\le s\le R.$$
Then, there exists a constant $c>0$ $($that is independent of $A_1$, $A_2$ and $B)$ such that
$$f(R/2)\le c\left(R^{-\gamma_1}A_1+R^{-\gamma_2}A_2\right)+cB.$$
\end{lemma}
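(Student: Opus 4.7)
The plan is to imitate the classical Giaquinta--Giusti iteration argument but carry the two different power scales $(s-r)^{-\gamma_1}$ and $(s-r)^{-\gamma_2}$ through the iteration simultaneously. First I would pick a parameter $\tau \in (0,1)$ large enough that both $\kappa\tau^{-\gamma_1}<1$ and $\kappa\tau^{-\gamma_2}<1$, i.e.\ any $\tau \in (\max\{\kappa^{1/\gamma_1},\kappa^{1/\gamma_2}\},1)$; such a $\tau$ exists since $\kappa \in (0,1)$. Then I would set $r_0=R/2$ and $r_{n+1}=r_n+(1-\tau)\tau^n(R/2)$, so that $r_n \nearrow R$ and $r_{n+1}-r_n=(1-\tau)\tau^n(R/2)$.

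Next I would apply the hypothesis to each pair $(r_n,r_{n+1}) \in [R/2,R]$, obtaining
$$f(r_n) \le A_1\bigl[(1-\tau)\tau^n R/2\bigr]^{-\gamma_1}+A_2\bigl[(1-\tau)\tau^n R/2\bigr]^{-\gamma_2}+B+\kappa f(r_{n+1}).$$
Iterating this estimate $N$ times starting from $n=0$ produces
$$f(R/2) \le \sum_{n=0}^{N-1}\kappa^n\Bigl(A_1\bigl[(1-\tau)\tau^n R/2\bigr]^{-\gamma_1}+A_2\bigl[(1-\tau)\tau^n R/2\bigr]^{-\gamma_2}+B\Bigr)+\kappa^N f(r_N).$$
Since $f$ is bounded on $[R/2,R]$ and $\kappa<1$, the tail $\kappa^N f(r_N)$ vanishes as $N \to \infty$; the boundedness of $f$ is precisely what lets us ignore $f(r_N)$ without tracking its size, which is the role played by the hypothesis that $f$ is bounded rather than merely finite pointwise.

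Finally, letting $N \to \infty$, the three series on the right-hand side converge by the choice of $\tau$: the $A_i$-sum is a geometric series with ratio $\kappa\tau^{-\gamma_i}<1$, contributing a factor $(1-\kappa\tau^{-\gamma_i})^{-1}[(1-\tau)R/2]^{-\gamma_i}$, while the $B$-sum is controlled by $(1-\kappa)^{-1}$. Collecting constants depending only on $\kappa,\gamma_1,\gamma_2$ gives the claimed bound $f(R/2)\le c(R^{-\gamma_1}A_1+R^{-\gamma_2}A_2)+cB$. The only delicate point, and the one distinguishing this extension from the single-exponent version in \cite{GG}, is the simultaneous choice of $\tau$ that makes both geometric series converge at once; beyond that the argument is mechanical.
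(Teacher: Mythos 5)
Your proof is correct and follows essentially the same Giaquinta--Giusti iteration as the paper: the same geometric sequence of radii $r_{n+1}-r_n=(1-\tau)\tau^n R/2$, the same telescoping of the hypothesis, and the same use of boundedness of $f$ to kill the term $\kappa^N f(r_N)$. The only cosmetic difference is that the paper chooses $\tau$ with $\kappa\tau^{-(\gamma_1+\gamma_2)}<1$ so as to dominate everything by a single geometric series, whereas you keep the two series separate and require $\kappa\tau^{-\gamma_i}<1$ for each $i$; both choices are valid.
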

\begin{proof}
Set
$$t_0=R/2; \quad t_{i+1}-t_i=(1-\tau)\tau^iR/2,\quad i\ge0$$
with $0<\tau<1$. By the iteration, for all $n\ge1$,
$$f(t_0)\le\kappa^nf(t_n)+\left[\frac{A_1}{(1-\tau)^{\gamma_1}}\left(\frac{R}{2}\right)^{-\gamma_1}+\frac{A_2}{(1-\tau)^{\gamma_2}}\left(\frac{R}{2}\right)^{-\gamma_2}+B\right]\sum^{n-1}_{i=0}\kappa^i\tau^{-i(\gamma_1+\gamma_2)}.$$
We choose now $\tau\in (0,1)$ such that $\tau^{-(\gamma_1+\gamma_2)}\kappa<1$ and let $n\rightarrow\infty$, getting
$$f(R/2)\le c\left(R^{-\gamma_1}A_1+R^{-\gamma_2}A_2\right)+cB$$
with $c=\left(\frac{2}{1-\tau}\right)^{(\gamma_1\vee\gamma_2)}\left(1-\tau^{-(\gamma_1+\gamma_2)}\kappa\right)^{-1}$. The proof is complete.
\end{proof}

We next
present the following elementary iteration lemma, see, e.g., \cite[Lemma 7.1]{Gi}.
\begin{lemma}\label{iteration}
Let $\{Y_j\}_{j\ge0}$ be a sequence of real positive numbers satisfying
$$Y_{j+1}\le c_0b^jY_j^{1+\beta},\quad j\ge0 $$
for some constants $c_0>0$, $b>1$ and $\beta>0$. If
$$Y_0\le c_0^{-1/\beta}b^{-1/\beta^2},$$
then
\begin{equation}\label{a-}
Y_j\le b^{-j/\beta}Y_0,\quad j\ge0,
\end{equation}
which in particular yields $\lim_{j\rightarrow\infty}Y_j=0$.
\end{lemma}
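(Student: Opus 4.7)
The plan is to prove \eqref{a-} by induction on $j$, with the threshold $Y_0\le c_0^{-1/\beta}b^{-1/\beta^2}$ being calibrated precisely so that the inductive step goes through. The base case $j=0$ is a tautology, since the desired bound reduces to $Y_0\le Y_0$. For the inductive step, assuming $Y_j\le b^{-j/\beta}Y_0$, I would substitute into the recursion $Y_{j+1}\le c_0 b^j Y_j^{1+\beta}$ and simplify, using the algebraic identity $j-j(1+\beta)/\beta=-j/\beta$, to obtain
$$Y_{j+1}\le c_0 b^j \bigl(b^{-j/\beta}Y_0\bigr)^{1+\beta}=c_0\, b^{-j/\beta}Y_0^{1+\beta}.$$
The target conclusion $Y_{j+1}\le b^{-(j+1)/\beta}Y_0$ then reduces, after dividing by $b^{-j/\beta}Y_0$, to $c_0 Y_0^\beta\le b^{-1/\beta}$, which is equivalent to the hypothesis $Y_0\le c_0^{-1/\beta}b^{-1/\beta^2}$. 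This closes the induction, and the convergence $\lim_{j\to\infty}Y_j=0$ is then immediate from \eqref{a-} because $b>1$ and $\beta>0$ force $b^{-1/\beta}<1$.

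There is no serious obstacle here: the argument is purely algebraic, and the conclusion follows from a single round of bookkeeping with exponents. The only structural point worth flagging in the writeup is the origin of the particular exponents $1/\beta$ and $1/\beta^2$: if one posits a geometric ansatz $Y_j\le \lambda^j Y_0$ and asks for it to be preserved by the recursion, then matching powers of $b$ on both sides forces $\lambda=b^{-1/\beta}$, after which the residual condition $c_0 Y_0^\beta\le \lambda$ produces exactly the stated threshold on $Y_0$. This explains why the hypothesis is \emph{sharp} for the chosen decay rate and makes clear that no finer choice of constants is available.
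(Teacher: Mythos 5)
Your proof is correct and follows the same induction as the paper: the base case is trivial, and the inductive step reduces (after the exponent bookkeeping $j-j(1+\beta)/\beta=-j/\beta$) to the condition $c_0Y_0^\beta\le b^{-1/\beta}$, which is exactly the hypothesis on $Y_0$. The remark on how the threshold arises from a geometric ansatz is a nice addition but does not change the argument.
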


\begin{proof}
We proceed by the induction. The inequality \eqref{a-} is obviously true for $j=0$.
Assume now that \eqref{a-} holds for $j\ge1$. Then,
$$Y_{j+1}\le c_0b^jb^{-j(1+\beta)/\beta}Y_0^{1+\beta}=(c_0b^{1/\beta}Y_0^\beta)b^{-(j+1)/\beta}Y_0\le b^{-(j+1)/\beta}Y_0,$$
so \eqref{a-} holds for $j+1$.
\end{proof}

In the proof of Theorem \ref{main1}, we also need the following lemma (see \cite[Lemmas 2.3 and 2.4]{DZZ}).
\begin{lemma}\label{L:lemma3.3} Let $s\in (0,1)$ and $p>1$. Then there is a constant $c_1>0$ such that for all $t_1<t_2$, $x_0\in \R^d$,  $r>0$ and $f\in L^p(t_1,t_2;W^{s,p}(B_r(x_0)))$,
\begin{itemize}
\item[{\rm(i)}] \begin{align*}&\int_{t_1}^{t_2}\oint_{B_r(x_0)}|f(t,x)|^{p(1+2s/d)}\,dx\,dt\\
&\le c_1\left(r^{sp}\int_{t_1}^{t_2}\int_{B_r(x_0)}\oint_{B_r(x_0)} \frac{|f(t,x)-f(t,y)|^p}{|x-y|^{d+sp}}\,dx\,dy\,dt+\int_{t_1}^{t_2}\oint_{B_r(x_0)}|f(t,x)|^p\,dx\,dt\right)\\
&\quad\quad\times \left(\sup_{t_1\le t\le t_2}\oint_{B_r(x_0)}|f(t,x)|^2\,dx\right)^{sp/d}.\end{align*}
\item[{\rm(ii)}] \begin{align*}&\int_{t_1}^{t_2}\oint_{B_r(x_0)}|f(t,x)|^{p(1+ps/d)}\,dx\,dt\\
&\le c_1\left(r^{sp}\int_{t_1}^{t_2}\int_{B_r(x_0)}\oint_{B_r(x_0)} \frac{|f(t,x)-f(t,y)|^p}{|x-y|^{d+sp}}\,dx\,dy\,dt+\int_{t_1}^{t_2}\oint_{B_r(x_0)}|f(t,x)|^p\,dx\,dt\right)\\
&\quad\quad\times \left(\sup_{t_1\le t\le t_2}\oint_{B_r(x_0)}|f(t,x)|^p\,dx\right)^{sp/d}.\end{align*}
\end{itemize}\end{lemma}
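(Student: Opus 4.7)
The plan is to combine a fractional Sobolev–Poincar\'e inequality on the ball with Hölder interpolation, then integrate in time and rescale to averaged integrals. Fix $t$ and write $B_r = B_r(x_0)$. In the subcritical regime $sp<d$, I would invoke the fractional Sobolev inequality
$$\|f(t,\cdot)\|_{L^{p^*}(B_r)}^p \le c\big([f(t,\cdot)]_{W^{s,p}(B_r)}^p + r^{-sp}\|f(t,\cdot)\|_{L^p(B_r)}^p\big),\qquad p^*=\frac{pd}{d-sp},$$
the scaling of the $L^p$ term being dictated by the natural dilation invariance. The cases $sp\ge d$ are handled by replacing $s$ by a slightly smaller $s'<s$ with $s'p<d$, which degrades only the constant since $W^{s,p}\hookrightarrow W^{s',p}$ on bounded domains.

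For part (i), I would apply Hölder's inequality with the conjugate exponents $d/(d-sp)$ and $d/(sp)$ to the split
$$|f|^{p(1+2s/d)} = |f|^{\,p^{*}\cdot(d-sp)/d}\cdot |f|^{2\cdot sp/d},$$
which yields the pointwise-in-$t$ estimate
$$\int_{B_r}|f(t,\cdot)|^{p(1+2s/d)}\,dx \le \left(\int_{B_r}|f|^{p^{*}}\,dx\right)^{(d-sp)/d}\left(\int_{B_r}|f|^{2}\,dx\right)^{sp/d}.$$
Inserting the Sobolev bound on the first factor, integrating in $t$ over $(t_1,t_2)$, and pulling out $\sup_{t}\big(\int|f|^2\big)^{sp/d}$ gives
$$\int_{t_1}^{t_2}\!\!\int_{B_r}|f|^{p(1+2s/d)}\,dx\,dt \le c\,\sup_{t}\!\left(\int_{B_r}|f|^{2}\,dx\right)^{\!sp/d}\!\int_{t_1}^{t_2}\!\big([f]_{W^{s,p}}^p + r^{-sp}\|f\|_{L^p}^p\big)\,dt.$$
Finally, using $r^{-sp}\asymp |B_r|^{-sp/d}$, I would multiply and divide by appropriate powers of $r$ to convert every plain integral to an averaged one: the factor $r^{-sp}\|f\|_{L^p}^p$ becomes $\oint|f|^p$ after $r^{sp}$ is absorbed into the first term, producing $r^{sp}[f]_{W^{s,p}}^p$, and $r^{-sp}\big(\int|f|^2\big)^{sp/d}$ becomes $\big(\oint|f|^2\big)^{sp/d}$; a final division by $|B_r|$ yields the averaged left-hand side, exactly matching (i).

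Part (ii) is identical in structure, except that I would Hölder-split as
$$|f|^{p(1+ps/d)} = |f|^{p^{*}\cdot(d-sp)/d}\cdot |f|^{p\cdot sp/d\cdot d/(sp)},$$
so that the second Hölder factor is $\big(\int|f|^p\big)^{sp/d}$ instead of $\big(\int|f|^2\big)^{sp/d}$; the remainder of the argument is verbatim. The only genuinely delicate step is the book-keeping in the rescaling to averaged integrals, where the exponent $sp/d$ appears in both factors on the right-hand side and one must track powers of $r$ (equivalently of $|B_r|$) carefully so that the $L^p$-norm in the first factor and the $L^2$- (resp. $L^p$-) norm in the second factor both end up averaged. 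The residual difficulty is the case $sp\ge d$, where the embedding $W^{s,p}(B_r)\hookrightarrow L^{p^{*}}(B_r)$ is not available with the natural $p^{*}$; the simplest remedy, as noted, is to run the whole argument with $s'<s$ slightly smaller, at the cost of a larger constant.
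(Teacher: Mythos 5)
Your argument for the case $sp<d$ is correct and is the standard one (the paper itself gives no proof of this lemma, it only cites \cite[Lemmas 2.3 and 2.4]{DZZ}, whose proof is exactly your H\"older-plus-Sobolev interpolation): the split $|f|^{p(1+2s/d)}=|f|^{p}\cdot|f|^{2sp/d}$ with conjugate exponents $d/(d-sp)$ and $d/(sp)$, the scale-invariant embedding $\|f\|_{L^{p^*}(B_r)}^p\le c([f]_{W^{s,p}(B_r)}^p+r^{-sp}\|f\|_{L^p(B_r)}^p)$, and the conversion to averaged integrals via $r^{-sp}\asymp|B_r|^{-sp/d}$ all check out. (Minor slip in (ii): your displayed identity $|f|^{p(1+ps/d)}=|f|^{p^*(d-sp)/d}\cdot|f|^{p\cdot sp/d\cdot d/(sp)}$ is false, since both exponents on the right equal $p$; the pointwise split must be $|f|^{p}\cdot|f|^{p^2s/d}$, and only after applying H\"older with exponent $d/(sp)$ does the second factor become $\bigl(\int|f|^p\bigr)^{sp/d}$.)

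The genuine gap is the case $sp\ge d$, which the lemma as stated does cover (e.g.\ $d=1$) and which your proposed remedy does not handle. The obstruction is not only the unavailability of $p^*=pd/(d-sp)$: the H\"older step itself requires the second exponent $d/(sp)$ to be at least $1$, because the statement forces the second factor to be exactly $\bigl(\sup_t\oint|f|^2\bigr)^{sp/d}$; when $sp/d>1$ no choice of a larger integrability exponent $q$ for the first factor restores a legitimate conjugate pair. Replacing $s$ by $s'$ with $s'p<d$ does not fix this, because running your argument with $s'$ proves a \emph{different} inequality, with $p(1+2s'/d)$ on the left and $\bigl(\sup_t\oint|f|^2\bigr)^{s'p/d}$ on the right; there is no way to upgrade it to the stated exponents without invoking higher norms of $f$ that are not controlled. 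In fact, for $sp\ge d$ the claimed bound is precisely the supercritical Gagliardo--Nirenberg interpolation $\|f\|_{L^{\rho}}\le C\|f\|_{W^{s,p}}^{\theta}\|f\|_{L^2}^{1-\theta}$ with $\tfrac1\rho=\theta(\tfrac1p-\tfrac sd)+\tfrac{1-\theta}2$ and $\tfrac1p-\tfrac sd<0$, which is true but requires a genuinely different argument (\`a la Brezis--Mironescu), not H\"older against a Sobolev embedding. So either restrict the lemma to $sp<d$ (which is all that symmetric treatments in the literature typically prove this way) or supply a separate proof for $sp\ge d$.
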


Recall that $$I^\ominus_r(t_0)=(t_0-r^{sp},t_0),\quad  B_r(x_0)=\{x\in \R^d:|x-x_0|<r\}
.$$
\begin{proposition}\label{sup}
Fix $p>1$. Let $u$ be a subsolution to $\partial_tu-\sL_tu=0$ in $I^\ominus_R(t_0)\times B_R(x_0)$ with
$t_0\in \R$, $x_0\in \R^d$ and $R>0$. Let $v(t,x)=(u(t,x)-k)_+$ with $k\ge0$. Then, there is a constant $C_2>0$ $($which depends on $p$ only$)$ so that
\begin{align*}
\sup_{I^\ominus_{R/2}(t_0)}\left(\oint_{B_{R/2}(x_0)}v(t,x)^p\,dx\right)^{\frac{1}{p}}
\le& C_2\left(\oint_{I^\ominus_R(t_0)}\oint_{B_R(x_0)}v(t,x)^{2p-2}\,dx\,dt+\oint_{I^\ominus_R(t_0)}\oint_{B_R(x_0)}v(t,x)^p\,dx\,dt\right)^{\frac{1}{p}}\\
&+C_2{\rm Tail}_{p-1}^{p-1}(v; x_0,R/2, t_0-R^{sp}, t_0).
\end{align*}
 \end{proposition}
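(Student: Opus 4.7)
The plan is to apply the Caccioppoli-type inequality (Lemma \ref{2}) with the special choice $\xi=p-1$: this makes $w=v^{(2p-2)/p}$ and $1+\xi=p$, so the supremum term on the left-hand side of Lemma \ref{2} becomes $\frac{1}{p}\sup_t\int v^p\phi^p\,dx$, which is exactly the quantity we want to control. I take $k(t)\equiv k\geq 0$ so the $k'$-term drops out, and for parameters $R/2\leq r<s\leq R$ I choose a product cutoff $\phi(t,x)=\eta(x)\chi(t)$ with $\eta\equiv 1$ on $B_r(x_0)$, $\supp\eta\subset B_{(r+s)/2}(x_0)$, $|\nabla\eta|\leq C/(s-r)$, and $\chi$ a time cutoff with $\chi\equiv 1$ on $[t_0-r^{sp},t_0]$, $\chi(t_0-s^{sp})=0$, $|\chi'|\leq C/(s^{sp}-r^{sp})$. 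The vanishing of $\phi$ at the lower time endpoint kills the initial-value term.

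The two non-tail terms produced by Lemma \ref{2} are controlled by standard cutoff estimates: $\int|\phi(x)-\phi(y)|^p J(x,y)\,dy\leq C(s-r)^{-sp}$ yields $\iiint \max\{w(x),w(y)\}^p|\phi(x)-\phi(y)|^p J\,dx\,dy\,dt\leq C(s-r)^{-sp}\iint v^{2p-2}\,dx\,dt$, and the $\partial_t\phi^p$ bound gives $\iint|\partial_t\phi^p|\,v^p\,dx\,dt\leq C(s^{sp}-r^{sp})^{-1}\iint v^p\,dx\,dt$. The tail term $\iint v^{p-1}\phi^p\bigl(\int_{B_s^c}v^{p-1}(y)J(x,y)\,dy\bigr)dx\,dt$ is where the main work lies: for $x\in\supp\eta\subset B_{(r+s)/2}$ and $y\in B_s^c$ one has $|y-x|\geq\tfrac{s-r}{2s}|y-x_0|$, so the inner integral is bounded by $C\bigl(\tfrac{s}{s-r}\bigr)^{d+sp}M(t)$ with $M(t):=\int_{B_{R/2}^c}v^{p-1}(t,y)/|y-x_0|^{d+sp}\,dy$. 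Hölder's inequality with exponents $p/(p-1)$ and $p$ gives $\int v^{p-1}\phi^p\,dx\leq|B_s|^{1/p}(\int v^p\phi^p\,dx)^{(p-1)/p}$, so the tail term is bounded by $C\bigl(\tfrac{s}{s-r}\bigr)^{d+sp}R^{d/p}\,S(s)^{(p-1)/p}\int_{t_0-R^{sp}}^{t_0}M(t)\,dt$, where $S(\rho):=\sup_{t\in I^\ominus_\rho(t_0)}\int_{B_\rho(x_0)}v(t,x)^p\,dx$.

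Applying Young's inequality $X^{(p-1)/p}Y\leq\tfrac{1}{2}X+CY^p$ absorbs $\tfrac{1}{2}S(s)$ into the right-hand side and produces the iterative inequality
\[
S(r)\leq\tfrac{1}{2}S(s)+C(s-r)^{-p(d+sp)}R^{p(d+sp)+d}\Bigl(\textstyle\int M\,dt\Bigr)^p+C(s-r)^{-sp}\iint v^{2p-2}+C(s-r)^{-1}R^{1-sp}\iint v^p,
\]
valid for $R/2\leq r<s\leq R$, where the mean value theorem gives $s^{sp}-r^{sp}\geq cR^{sp-1}(s-r)$. A routine extension of Lemma \ref{EX} to three power-decay terms then yields
\[
S(R/2)\leq C\Bigl[R^d\Bigl(\textstyle\int M\,dt\Bigr)^p+R^{-sp}\iint v^{2p-2}+R^{-sp}\iint v^p\Bigr].
\]
Dividing by $|B_{R/2}|$, taking the $p$-th root (using subadditivity of $x\mapsto x^{1/p}$ for $p\geq 1$), and noting $\int M\,dt=2^{sp}\,\mathrm{Tail}_{p-1}^{p-1}(v;x_0,R/2,t_0-R^{sp},t_0)$ gives the claim.

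The main obstacle is the coupling between the left-hand side (the sup-in-time of $v^p$) and the $v^{p-1}$-weighted tail contribution produced by Lemma \ref{2}: these must be decoupled by Hölder plus Young in order to produce the correct $(p-1)$-th power of the tail, and the ensuing $(s-r)^{-\gamma}$ factors force the use of the iteration Lemma \ref{EX}. Tracking the powers of $R$ through the Hölder/Young/iteration chain so that they combine into exactly the scaling in the proposition is the delicate bookkeeping point.
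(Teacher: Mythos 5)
Your proof is correct and follows essentially the same route as the paper's: apply the Caccioppoli inequality of Lemma \ref{2} with $\xi=p-1$ and a constant level $k$, bound the nonlocal term via the geometric estimate $|y-x|\geq \frac{s-r}{2s}|y-x_0|$, decouple the resulting $v^{p-1}$-weighted tail from the sup-in-time term by H\"older plus Young, and close with the Giaquinta--Giusti iteration of Lemma \ref{EX}. The only (immaterial) differences are that the paper gets the absorption constant $\kappa=\frac{p-1}{p}$ via Young followed by Jensen rather than your H\"older-then-Young with $\kappa=\frac12$, and it splits into the cases $sp>1$ and $sp\leq 1$ to reduce to the two-exponent form of Lemma \ref{EX} instead of your uniform three-term extension.
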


\begin{proof}
Let $R/2\le r\le s\le R$. Choose $\phi(t,x)=\phi_1(x)\phi_2(t)$ such that $\phi_1
:B_s(x_0)\to [0,1]$ is a smooth
cut-off function of $B_r(x_0)\subset B_{\frac{r+s}{2}}(x_0)$, that is, $\phi_1(x)=1$ for all $x\in B_r(x_0)$ and $\phi_1(x)=0$ for all $x\in B_{\frac{r+s}{2}}(x_0)^c$, and $\phi_2: (t_0- s^{sp}, t_0) \to [0,1]$ is differentiable such that $\phi_2(t)=1$ for all $t\in (t_0-r^{sp},t_0)$ and $\phi_2(t)=0$ for all $t\in (t_0- s^{sp}, t_0-(\frac{r+s}{2})^{sp})$.
In particular,
$$|D\phi|\le c_0\left(\frac{s-r}{2}\right)^{-1}
\quad \text{and}
\quad|\partial_t\phi|\le c_0\left[\left(\frac{r+s}{2}\right)^{sp}-r^{sp}\right]^{-1}.$$
Let $k(t)=k\ge0$ be independent of $t$. In particular, $k'(t)=0$. Taking $\tau_2=t_0$, $\tau_1=t_0-r^{sp}$, $h=(\frac{r+s}{2})^{sp}
-r^{sp}$
and $\xi=p-1>0$ as well as the functions $\phi(t,x)$ and $k(t)$ above in the Caccioppoli-type inequality established in Lemma \ref{2}, we find
\begin{align*}
&\sup_{I^\ominus_r(t_0)}\oint_{B_r(x_0)}v(t,x)^p \,dx\\
&\le c_1\sup_{I^\ominus_ 
{\frac{r+s}{2}}(t_0)} \oint_{B_s(x_0)}v(t,x)^p\phi(t,x)^p\,dx\\
&\le
c_2\int_{I^\ominus_{\frac{r+s}{2}}(t_0)}\left(\sup_{x\in B_{\frac{r+s}{2}}(x_0)}\int_{B_s(x_0)^c}v(t,y)^{p-1}J(t;x,y)\,dy\right)\left(\oint_{B_s(x_0)}v(t,x)^{p-1} \phi(t,x)^p\,dx\right)\,dt\\
&\quad+c_2\int_{I^\ominus_{\frac{r+s}{2}}(t_0)}\int_{B_s(x_0)}\oint_{B_s(x_0)} \max\{w(t,x),w(t,y )\}^p|\phi(t,x)-\phi(t,y)|^pJ(t;x,y)\,dx\,dy\,dt\\
&\quad+c_2\int_{I^\ominus_{\frac{r+s}{2}}(t_0)}\oint_{B_s(x_0)}|\partial_t\phi(t,x)^{p}|v(t,x)^{1+\xi}\,dx\,dt\\
&=:
H_1+H_2+H_3,
\end{align*}
where $w=v^{2(p-1)/p}$.

By applying the Young inequality $$ab\le\frac{a^{p/(p-1)}}{p/(p-1)}+
\frac{b^p}{p},
\quad a,b\ge0,$$ we get
\begin{align*}
&
H_1\le c_3\left(\sup_{t\in I^\ominus_{\frac{r+s}{2}}(t_0)}\oint_{B_{\frac{r+s}{2}}(x_0)}v(t,x)^{p-1}dx\right)\left(\left(\frac{s}{s-r}\right)^{d+sp}\int_{I^\ominus_{\frac{r+s}{2}}(t_0)}\int_{B_s(x_0)^c}\frac{v(t,y)^{p-1}}{|x_0-y|^{d+sp}}\,dy\,dt\right)\\
&\le \frac{p-1}{p}\sup_{t\in I^\ominus_{\frac{r+s}{2}}(t_0)}\left(\oint_{B_{\frac{r+s}{2}}(x_0)}v(t,x)^{p-1}dx\right)^{\frac{p}{p-1}}+\frac{c^p_3}{p}\left(\left(\frac{s}{s-r}\right)^{d+sp}\int_{I^\ominus_{\frac{r+s}{2}}(t_0)}\int_{B_s(x_0)^c}\frac{v(t,y)^{p-1}}{|x_0-y|^{d+sp}}\,dy\,dt\right)^p\\
&\le\frac{p-1}{p}\sup_{I^\ominus_{\frac{r+s}{2}}(t_0)}\oint_{B_{\frac{r+s}{2}}(x_0)}v(t,x)^p\,dx+
\frac{c^p_3}{p}(s-r)^{-p(d+sp)}s^{p(d+sp)}\left(\int_{I^\ominus_{\frac{r+s}{2}}(t_0)}\int_{B_s(x_0)^c}\frac{v(t,y)^{p-1}}{|x_0-y|^{d+sp}}\,dy\,dt\right)^p,
\end{align*}
where in the first inequality we used the fact that, for all $x\in B_{\frac{r+s}{2}}(x_0)$ and $y\in B_s(x_0)^c$,
$$\frac{|x_0-y|}{|x-y|}\le\frac{|x_0-x|+|x-y|}{|x-y|}\le1+\frac{s+r}{s-r}=\frac{2s}{s-r},$$ and the last inequality follows from the
Jensen inequality.

Note that
\begin{align*}
&\int_{B_s(x_0)}\oint_{B_s(x_0)} \max\{w(t,x),w(t,y )\}^p|\phi(t,x)-\phi(t,y)|^pJ(t;x,y)\,dx\,dy\\
&\le2\int_{B_s(x_0)}\oint_{B_s(x_0)}w(t,x)^p|\phi(t,x)-\phi(t,y)|^pJ(t;x,y)\,dx\,dy\\
&\le c_4\Bigg[\oint_{B_s(x_0)}w(t,x)^p \left(\frac{s-r}{2}\right)^{-p}\int_{\{|x-y|\le \frac{s-r}{2}\}}|x-y|^p J(t;x,y)\,dy\,dx\\
&\qquad\quad+\oint_{B_s(x_0)} w(t,x)^p\int_{\{|x-y|> \frac{s-r}{2}\}}  J(t;x,y)\,dy\,dx\Bigg]\\
&\le c_5\left(\frac{s-r}{2}\right)^{-sp}\oint_{B_s(x_0)}w(t,x)^p \,dx,
\end{align*} so
$$
H_2\le c_6\left(\frac{s-r}{2}\right)^{-sp}\int_{I^\ominus_{\frac{r+s}{2}}(t_0)}\oint_{B_s(x_0)}v(t,x)^{2p-2} \,dx\,dt.$$

Furthermore, it is obvious that
\begin{align*}
&
H_3\le c_7\left[\left(\frac{r+s}{2}\right)^{sp}-r^{sp}\right]^{-1}\int_{I^\ominus_{\frac{r+s}{2}}(t_0)}\oint_{B_s(x_0)}v(t,x)^p\,dx\,dt.
\end{align*}

Putting all the estimates above, we find that
\begin{align*}
&\sup_{I^\ominus_r(t_0)}\oint_{B_r(x_0)}v(t,x)^p\,dx\\ &\le\frac{p-1}{p}\sup_{I^\ominus_{\frac{r+s}{2}}(t_0)}\oint_{B_{\frac{r+s}{2}}(x_0)}v(t,x)^p\,dx+
\frac{c^p_3}{p}(s-r)^{-p(d+sp)}s^{p(d+sp)}\left(\int_{I^\ominus_{\frac{r+s}{2}}(t_0)}\int_{B_s(x_0)^c}\frac{v(t,y)^{p-1}}{|x_0-y|^{d+sp}}\,dy\,dt\right)^p\\
&\quad+c_6\left(\frac{s-r}{2}\right)^{-sp}\int_{I^\ominus_{\frac{r+s}{2}}(t_0)}\oint_{B_s(x_0)}v(t,x)^{2p-2} \,dx\,dt\\
&\quad+c_7\left[\left(\frac{r+s}{2}\right)^{sp}-r^{sp}\right]^{-1}\int_{I^\ominus_{\frac{r+s}{2}}(t_0)}\oint_{B_s(x_0)}v(t,x)^p\,dx\,dt.\\
&\le\frac{p-1}{p}\sup_{I^\ominus_{\frac{r+s}{2}}(t_0)}\oint_{B_{\frac{r+s}{2}}(x_0)}v(t,x)^p\,dx+c_8(s-r)^{-p(d+sp)}R^{p(d+sp)}\left(\int_{I^\ominus_{R}(t_0)}\int_{B_{R/2}(x_0)^c}\frac{v(t,y)^{p-1}}{|x_0-y|^{d+sp}}\,dy\,dt\right)^p\\
&\quad+c_8\left(\left(\frac{s-r}{2}\right)^{-sp}\vee\left[\left(\frac{r+s}{2}\right)^{sp}-r^{sp}\right]^{-1}\right)\int_{I^\ominus_R(t_0)}\oint_{B_R(x_0)}v(t,x)^{2p-2}\,dx\,dt\\
&\quad+c_8\left(\left(\frac{s-r}{2}\right)^{-sp}\vee\left[\left(\frac{r+s}{2}\right)^{sp}-r^{sp}\right]^{-1}\right)\int_{I^\ominus_R(t_0)}\oint_{B_R(x_0)}v(t,x)^p\,dx\,dt.
\end{align*}

\textbf {Case 1: $sp>1$.}\, In this case,  it follows from the mean value theorem that $$\left[\left(\frac{r+s}{2}\right)^{sp}-r^{sp}\right]^{-1}\le\frac{2^{sp}}{sp}R^{1-sp}(s-r)^{-1}\le 2^{sp} R^{1-sp}(s-r)^{-1},\quad R/2\le r\le s \le R.$$ This along with the fact $$ \left(\frac{s-r}{2}\right)^{-sp}\ge  \left(\frac{R}{2}\right)^{-sp+1} \left(\frac{s-r}{2}\right)^{-1}=2^{sp} R^{1-sp}(s-r)^{-1}$$ yields that

$$\left(\frac{s-r}{2}\right)^{-sp}\vee\left[\left(\frac{r+s}{2}\right)^{sp}-r^{sp}\right]^{-1}\le  \left(\frac{s-r}{2}\right)^{-sp} .$$

Set
$$f(r)=\sup_{I^\ominus_r(t_0)}\oint_{B_r(x_0)}v(t,x)^p\,dx,\quad R/2\le r\le R,$$
$$B=c_8R^{p(d+sp)}\left(\int_{I^\ominus_{R}(t_0)}\int_{B_{R/2}^c(x_0)}\frac{v(t,y)^{p-1}}{|x_0-y|^{d+sp}}\,dy\,dt\right)^p,$$
$$A_1= c_8 \int_{I^\ominus_R(t_0)}\oint_{B_R(x_0)}v(t,x)^{2p-2}\,dx\,dt,\quad A_2= c_8 \int_{I^\ominus_R(t_0)}\oint_{B_R(x_0)}v(t,x)^p\,dx\,dt.$$
Then,
$$f(r)\le(s-r)^{-sp}(A_1+A_2)+(s-r)^{-p(d+sp)}B+\frac{p-1}{p}f((r+s)/2),\quad R/2\le r\le (r+s)/2\le R.$$
Hence, by
Lemma \ref{EX},
$$f(R/2)\le c_9R^{-sp}(A_1+A_2)+c_9R^{-p(d+sp)}B,$$
which yields, noting $|I^\ominus_{R}(t_0)|=R^{sp}$,
\begin{align*}
\sup_{I^\ominus_{R/2}(t_0)}\oint_{B_{R/2}(x_0)}v(t,x)^p\,dx
\le &c_{10}\left(\oint_{I^\ominus_R(t_0)}\oint_{B_R(x_0)}v(t,x)^{2p-2}\,dx\,dt+\oint_{I^\ominus_R(t_0)}\oint_{B_R(x_0)}v(t,x)^p\,dx\,dt\right)\\
& +c_{10}\left(\int_{I^\ominus_{R}(t_0)}\int_{B_{R/2}(x_0)^c}\frac{v(t,y)^{p-1}}{|x_0-y|^{d+sp}}\,dy\,dt\right)^p.
\end{align*}

\textbf {Case 2: $sp\in(0,1]$.} Now, it holds that $$\left[\left(\frac{r+s}{2}\right)^{sp}-r^{sp}\right]^{-1}\ge \left(\frac{s-r}{2}\right)^{-sp}$$
and,  by the mean value theorem, for any $R/2\le r\le s\le R$, $$\left[\left(\frac{r+s}{2}\right)^{sp}-r^{sp}\right]^{-1}\le \frac{2}{sp}R^{1-sp}(s-r)^{-1}.$$
In particular,
$$\left(\frac{s-r}{2}\right)^{-sp}\vee\left[\left(\frac{r+s}{2}\right)^{sp}-r^{sp}\right]^{-1}\le \frac{2}{sp}R^{1-sp}(s-r)^{-1}.$$

Let $f(r)$ and $B$ be
in Case 1, and set
$$A_1=\frac{2c_8}{sp}R^{1-sp}\int_{I^\ominus_R(t_0)}\oint_{B_R(x_0)}v(t,x)^{2p-2}\,dx\,dt,
\quad A_2=\frac{2c_8}{sp}R^{1-sp}\int_{I^\ominus_R(t_0)}\oint_{B_R(x_0)}v(t,x)^p\,dx\,dt.$$
Then,
$$f(r)\le(s-r)^{-1}(A_1+A_2)+(s-r)^{-p(d+sp)}B+\frac{p}{p-1}f((r+s)/2),\quad R/2\le r\le (r+s)/2\le R.$$
According to Lemma \ref{EX}, we have
$$f(R/2)\le c_{11}R^{-1}(A_1+A_2)+c_{11}R^{-p(d+sp)}B,$$
which yields, noting $|I^\ominus_{R}(t_0)|=R^{sp}$,
\begin{align*}
&\sup_{I^\ominus_{R/2}(t_0)}\oint_{B_{R/2}(x_0)}v(t,x)^p\,dx\\
&\le c_{12}R^{-1}\left(R^{1-sp}\int_{I^\ominus_R(t_0)}\oint_{B_R(x_0)}v(t,x)^{2p-2}\,dx\,dt+R^{1-sp}\int_{I^\ominus_R(t_0)}\oint_{B_R(x_0)}v(t,x)^p\,dx\,dt\right)\\
&\quad\,\,+c_{12}\left(\int_{I^\ominus_{R}(t_0)}\int_{B_{R/2}(x_0)^c}\frac{v(t,y)^{p-1}}{|x_0-y|^{d+sp}}\,dy\,dt\right)^p\\
&=c_{12}\left(\oint_{I^\ominus_R(t_0)}\oint_{B_R(x_0)}v(t,x)^{2p-2}\,dx\,dt+\oint_{I^\ominus_R(t_0)}\oint_{B_R(x_0)}v(t,x)^p\,dx\,dt\right)\\
&\quad\,\,+c_{12}\left(\int_{I^\ominus_{R}(t_0)}\int_{B_{R/2}(x_0)^c}\frac{v(t,y)^{p-1}}{|x_0-y|^{d+sp}}\,dy\,dt\right)^p.
\end{align*}

Combining with both conclusions above, we obtain the desired assertion.
\end{proof}

\begin{proposition}\label{Cl}
Fix $p>1$. Let $R>0$ and $R/2\le r<r+\rho\le R$ with $r+\rho/2\le3R/4$. Let $u$ be a subsolution to $\partial_tu-\sL_tu=0$ in $I^\ominus_R(t_0)\times B_R(x_0)$ with $t_0\in \R$ and $x_0\in\R^d$.  For $l, C>0$, define \begin{equation}\label{vl}
v(t,x)=v_l(t,x):=(u(t,x)-k(t))_+,\qquad k(t):=C\int^t_{t_0-(r+\rho)^{sp}}\int_{B_R(x_0)^c}\frac{u_+(s,y)^{p-1}}{|x_0-y|^{d+sp}}\,dy\,ds+l.
\end{equation}   Then, for any $\xi>0$, we can choose a proper positive constant $C$ in the definition of \eqref{vl} so that there exists a constant $C_3>0$ $($which depends on $p$ and $\xi$ only$)$ such that
\begin{equation*}\begin{split}
&\sup_{t\in I^\ominus_r(t_0)}\int_{B_{r}(x_0)}v_l(t,x)^{1+\xi}\,dx\\
&+\int_{I^\ominus_{r}(t_0)}\int_{B_{r}(x_0)} \int_{B_{r}(x_0)} |v_l^{(p-1+\xi)/p}(t,x)-v_l^{(p-1+\xi)/p}(t,y)|^p J(t;x,y)\,dx\,dy\,dt\\
&\le C_3 \rho^{-sp}\int_{I^\ominus_{r+\rho}(t_0)}\int_{B_{r+\rho}(x_0)} v_l(t,x)^{p-1+\xi}\,dx\,dt\\
&\quad +C_3 \left[(r+\rho)^{sp}-r^{sp}\right]^{-1}\int_{I^\ominus_{r+\rho}(t_0)}\int_{B_{r+\rho}(x_0)} v_l(t,x)^{1+\xi}\,dx\,dt\\
&\quad+C_3\rho^{-sp}\left(\frac{R}{\rho}\right)^d\left(\int_{I^\ominus_{r+\rho}(t_0)}\int_{B_{r+\rho}(x_0)} v_l(t,x)^{\xi}\,dx\,dt\right)\left(\sup_{t\in I^\ominus_R(t_0)}\oint_{B_R(x_0)}u_+(t,y)^{p-1}\,dy\right).
\end{split}\end{equation*}
\end{proposition}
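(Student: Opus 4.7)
The plan is to invoke the Caccioppoli-type inequality of Lemma \ref{2} with outer radius $r+\rho$, endpoints $\tau_1 = t_0 - r^{sp}$, $\tau_2 = t_0$, $h = (r+\rho)^{sp} - r^{sp}$, the given exponent $\xi$, the level function $k(t)$ from \eqref{vl}, and a separable cut-off $\phi(t,x) = \phi_1(x)\phi_2(t)$. Here I would choose $\phi_1 \equiv 1$ on $B_r(x_0)$, $\operatorname{supp}\phi_1 \subset B_{r+\rho/2}(x_0)$, $|\nabla \phi_1| \le c/\rho$, and $\phi_2 \equiv 1$ on $I^\ominus_r(t_0)$, $\phi_2(t_0 - (r+\rho)^{sp}) = 0$, $|\partial_t \phi_2| \le c[(r+\rho)^{sp} - r^{sp}]^{-1}$. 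Because $k(t)$ is a primitive of a non-negative integrand plus a constant, $k'(t) \ge 0$, so Lemma \ref{2} is applicable. On its LHS I would drop the first Gagliardo term and restrict the remaining two to $B_r(x_0) \times I^\ominus_r(t_0)$, where $\phi \equiv 1$; this already dominates the target quantity.

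Three of the RHS terms admit routine estimates. The initial-data term at $t = \tau_1 - h$ vanishes because $\phi_2(t_0 - (r+\rho)^{sp}) = 0$. The $|\phi(t,x) - \phi(t,y)|^p$ term is handled by splitting into $\{|x-y| \le \rho/2\}$ and $\{|x-y| > \rho/2\}$, applying $|\phi(t,x) - \phi(t,y)| \le c|x-y|/\rho$ on the first region, $|\phi| \le 1$ on the second, and the kernel bound \eqref{J0}; since $w^p = v_l^{p-1+\xi}$, this yields the contribution $C\rho^{-sp}\int_{I^\ominus_{r+\rho}(t_0)}\int_{B_{r+\rho}(x_0)} v_l^{p-1+\xi}\,dx\,dt$. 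The time-derivative term is directly bounded by $C[(r+\rho)^{sp}-r^{sp}]^{-1}\int_{I^\ominus_{r+\rho}(t_0)}\int_{B_{r+\rho}(x_0)} v_l^{1+\xi}\,dx\,dt$.

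The crux is the interaction between the tail integral $C_1\int\!\!\int_{B_{r+\rho}} v_l^{\xi}\phi^p\bigl(\int_{B_{r+\rho}^c} v_l(t,y)^{p-1}J(t;x,y)\,dy\bigr)dx\,dt$ and the negative term $-\int\!\!\int v_l^{\xi}\phi^p k'(t)\,dx\,dt$. I would split the inner $y$-integration into $B_R \setminus B_{r+\rho}$ and $B_R^c$. For $y \in B_R \setminus B_{r+\rho}$ and $x \in \operatorname{supp}\phi$, the bound $|x-y| \ge \rho/2$ gives $J(t;x,y) \le C\rho^{-d-sp}$; using $v_l \le u_+$ and $|B_R| \le cR^d$, this part produces exactly the third term $C\rho^{-sp}(R/\rho)^d\bigl(\sup_{t\in I^\ominus_R(t_0)}\oint_{B_R(x_0)} u_+(t,y)^{p-1}\,dy\bigr)\int\!\!\int v_l^{\xi}\,dx\,dt$. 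For $y \in B_R^c$ and $x \in \operatorname{supp}\phi \subset B_{3R/4}(x_0)$ (exploiting the hypothesis $r+\rho/2 \le 3R/4$), the estimate $|x-y| \ge |x_0-y|/4$ yields $J(t;x,y) \le C|x_0-y|^{-d-sp}$, so this portion is bounded by a universal constant times $\int\!\!\int v_l^{\xi}\phi^p\bigl(\int_{B_R^c} u_+(t,y)^{p-1}/|x_0-y|^{d+sp}\,dy\bigr)dx\,dt$, which has exactly the same $(t,x)$-structure as $\int\!\!\int v_l^{\xi}\phi^p k'(t)\,dx\,dt$. Choosing the constant $C$ in \eqref{vl} at least as large as this universal constant makes the negative $k'$-term absorb the $B_R^c$ contribution completely.

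The main obstacle is precisely this calibration of the constant $C$ in \eqref{vl}: it must be fixed as a purely geometric quantity depending on $p$, $s$, $d$, $\Lambda$ and the constant $C_1$ of Lemma \ref{2}, so that the cancellation is unconditional and uniform in $x \in \operatorname{supp}\phi$. Once $C$ is so chosen, no further iteration or absorption is needed, and assembling the three surviving RHS contributions yields the right-hand side of Proposition \ref{Cl}.
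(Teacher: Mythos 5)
Your proposal is correct and follows essentially the same route as the paper: the same separable cut-off, the same application of Lemma \ref{2} with $\tau_1=t_0-r^{sp}$, $h=(r+\rho)^{sp}-r^{sp}$, the same splitting of the nonlocal tail into the annulus $B_R\setminus B_{r+\rho}$ (giving the $(R/\rho)^d$ term) and $B_R^c$ (absorbed by the negative $k'$ term via the calibration $|x-y|\ge|x_0-y|/4$ and the choice of $C$). No gaps.
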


\begin{proof}
Let $R/2\le r<r+\rho\le R$ with $r+\rho/2\le3R/4$. Similar to the proof of
Proposition \ref{sup}, introduce the function $\phi(t,x)=\phi_1(x)\phi_2(t)$ defined on $B_{r+\rho}(x_0)\times(t_0-(r+\rho)^{sp},t_0)$, where $\phi_1: B_{r+\rho}(x_0)\to [0,1]$ is a smooth cut-off function of $B_r(x_0)\subset B_{r+\rho/2}(x_0)$ and $\phi_2: (t_0-(r+\rho)^{sp},t_0)\to [0,1]$ is differentiable such that $\phi_2(t)=1$ for all $t\in (t_0-r^{sp},t_0)$ and $\phi_2(t_0-(r+\rho)^{sp})=0$. In particular,
$$|D\phi|\le c_0\rho^{-1}
\quad \text{and}
\quad|\partial_t\phi|\le c_0\left[(r+\rho)^{sp}-r^{sp}\right]^{-1}.$$
It is clear that $k'(t)\ge0$ for all $t\in (t_0-(r+\rho)^{sp},t_0).$
Apply the Caccioppoli-type inequality established in Lemma \ref{2} with $\tau_2=t_0$, $\tau_1=t_0-r^{sp}$, $h=(r+\rho)^{sp}-r^{sp}$ and the function $\phi(t,x)$ as well as the function $v(t,x)$ defined by \eqref{vl}, where $C>0$ is a constant that will be chosen suitably. As a result, we have
\begin{equation}\label{a}\begin{split}
&\sup_{t\in I^\ominus_r(t_0)}\int_{B_{r}(x_0)}v_l(t,x)^{1+\xi}\,dx\\
&\quad+\int_{I^\ominus_{r}(t_0)}\int_{B_{r}(x_0)} \int_{B_{r}(x_0)}|v_l^{(p-1+\xi)/p}(t,x)-v_l^{(p-1+\xi)/p}(t,y)|^p J(t;x,y)\,dx\,dy\,dt\\
&\le c_1\sup_{t\in I^\ominus_ 
{r+\rho}(t_0)}\int_{B_{r+\rho}(x_0)}v_l(t,x)^{1+\xi}\phi(t,x)^p\,dx\\
&\quad+c_1\int_{I^\ominus_{r+\rho}(t_0)}\int_{B_{r+\rho}(x_0)} \int_{B_{r+\rho}(x_0)} |v_l^{(p-1+\xi)/p}(t,x)-v_l^{(p-1+\xi)/p}(t,y)|^p \phi(t,x)^p J(t;x,y)\,dx\,dy\,dt\\
&\le c_2\int_{I^\ominus_{r+\rho}(t_0)}\left(\int_{B_{r+\rho}(x_0)} v_l(t,x)^{\xi}\phi(t,x)^p\,dx\right)\left(\sup_{x\in B_{r+\rho/2}(x_0)}\int_{\R^d\setminus {B_{r+\rho}(x_0)}}v_l(t,y)^{p-1}J(t;x,y)\,dy\right)\,dt\\
&\quad+c_2\int_{I^\ominus_{r+\rho}(t_0)}\int_{B_{r+\rho}(x_0)}\int_{B_{r+\rho}(x_0)} \max\{v_l^{(p-1+\xi)/p}(t,x),v_l^{(p-1+\xi)/p}(t,y )\}^p|\phi(t,x)-\phi(t,y)|^pJ(t;x,y)\,dx\,dy\,dt\\
&\quad+c_2\int_{I^\ominus_{r+\rho}(t_0)}\int_{B_{r+\rho}
(x_0)}|\partial_t\phi(t,x)^{p}|v_l(t,x)^{1+\xi}\,dx\,dt\\
&\quad-C\int_{I^\ominus_{r+\rho}(t_0)}\left(\int_{B_{r+\rho}(x_0)}v_l(t,x)^{\xi}\phi(t,x)^
p\,dx\right)\left(\int_{B_R(x_0)^c}\frac{u_+(t,y)^{p-1}}{|x_0-y|^{d+sp}}\,dy\right)\,dt,
\end{split}\end{equation}
where the constant $c_2$ is independent of $C$.

The four terms on the right-hand side of \eqref{a} are treated as follows. The second term follows from the argument for the estimate of $H_2$
in the proof of Proposition \ref{sup}:
\begin{align*}
&\int_{I^\ominus_{r+\rho}(t_0)}\int_{B_{r+\rho}(x_0)}\int_{B_{r+\rho}(x_0)} \max\{
v_l^{(p-1+\xi)/p}(t,x),
v_l^{(p-1+\xi)/p}(t,y )\}^p|\phi(t,x)-\phi(t,y)|^pJ(t;x,y)\,dx\,dy\,dt\\
&\le c_3 \rho^{-sp}\int_{I^\ominus_{r+\rho}(t_0)}\int_{B_{r+\rho}(x_0)} v_l(t,x)^{p-1+\xi}\,dx\,dt.
\end{align*}
For the third term,
$$\int_{I^\ominus_{r+\rho}(t_0)}\int_{B_{r+\rho}
(x_0)}|\partial_t\phi(t,x)^{p}|
v_l(t,x)^{1+\xi}\,dx\,dt\le c_4\left[(r+\rho)^{sp}-r^{sp}\right]^{-1}\int_{I^\ominus_{r+\rho}(t_0)}\int_{B_{r+\rho}(x_0)} v_l(t,x)^{1+\xi}\,dx\,dt.$$
The first term and the last negative term need to be handled carefully. To this end, we decompose the tail term into an intermediate tail and a remaining tail:
\begin{align*}
&\sup_{x\in B_{r+\rho/2}(x_0)}\int_{\R^d\setminus {B_{r+\rho}(x_0)}}v_l(t,y)^{p-1}J(t;x,y)\,dy\\
&\le\sup_{x\in B_{r+\rho/2}(x_0)}\int_{\R^d\setminus {B_{r+\rho}(x_0)}}
u_+(t,y)^{p-1}J(t;x,y)\,dy\\
&=\sup_{x\in B_{r+\rho/2}(x_0)}\int_{B_R(x_0)\setminus {B_{r+\rho}(x_0)}}
u_+(t,y)^{p-1}J(t;x,y)\,dy+\sup_{x\in B_{r+\rho/2}(x_0)}\int_{\R^d\setminus {B_R(x_0)}}u_+(t,y)^{p-1}J(t;x,y)\,dy\\
&\le c_5\rho^{-d-sp}\int_{B_R(x_0)}
u_+(t,y)^{p-1}\,dy+c_5\int_{\R^d\setminus {B_R(x_0)}}\frac{
u_+(t,y)^{p-1}}{|x_0-y|^{d+sp}}\,dy\\
&\le c_6\rho^{-sp}\left(\frac{R}{\rho}\right)^d\oint_{B_R(x_0)}
u_+(t,y)^{p-1}\,dy+c_6\int_{\R^d\setminus {B_R(x_0)}}\frac{
u_+(t,y)^{p-1}}{|x_0-y|^{d+sp}}\,dy,
\end{align*}
where in the first inequality we used the fact that $v_l(t,y)\le u_+(t,y)$, and in the second inequality we used the fact that for all $x\in  B_{r+\rho/2}(x_0)$ and
$y\in\R^d\setminus B_R(x_0)$,
$$\frac{|x_0-y|}{|x-y|}\le 1+\frac{|x_0-x|}{|x-y|}\le 1+\frac{r+\rho/2}{R-r-\rho/2}\le 1+\frac{3R/4}{R/4}=4.$$
Then, we
obtain
\begin{align*}
&c_2\int_{I^\ominus_{r+\rho}(t_0)}\left(\int_{B_{r+\rho}(x_0)} v_l(t,x)^{\xi}\phi(t,x)^p\,dx\right)\left(\sup_{x\in B_{r+\rho/2}(x_0)}\int_{\R^d\setminus {B_{r+\rho}(x_0)}}v_l(t,y)^{p-1}J(t;x,y)\,dy\right)\,dt\\
&\le c_2c_6\rho^{-sp}\left(\frac{R}{\rho}\right)^d\left(\int_{I^\ominus_{r+\rho}(t_0)}\int_{B_{r+\rho}(x_0)}v_l(t,x)^{\xi}\phi(t,x)^p\,dx\,dt\right)\left(\sup_{t\in I^\ominus_R(t_0)}\oint_{B_R(x_0)}u_+(t,y)^{p-1}\,dy\right)\\
&\quad+c_2c_6\int_{I^\ominus_{r+\rho}(t_0)}\left(\int_{B_{r+\rho}(x_0)} v_l(t,x)^{\xi}\phi(t,x)^p\,dx\right)\left(\int_{B_R(x_0)^c}\frac{
u_+(t,y)^{p-1}}{|x_0-y|^{d+sp}}\,dy\right)\,dt.
\end{align*}
The last term in the above estimate will be cancelled with the last negative term on the right-hand side of \eqref{a}, if we choose $C:=c_2c_6$.

Combining with all these estimates for the right-hand side of \eqref{a}, we obtain
\begin{equation*}\begin{split}
&\sup_{t\in I^\ominus_r(t_0)}\int_{B_{r}(x_0)}v_l(t,x)^{1+\xi}\,dx\\
&+\int_{I^\ominus_{r}(t_0)}\int_{B_{r}(x_0)} \int_{B_{r}(x_0)} |v_l^{(p-1+\xi)/p}(t,x)-v_l^{(p-1+\xi)/p}(t,y)|^p J(t;x,y)\,dx\,dy\,dt\\
&\le c_7 \rho^{-sp}\int_{I^\ominus_{r+\rho}(t_0)}\int_{B_{r+\rho}(x_0)} v_l(t,x)^{p-1+\xi}\,dx\,dt\\
&\quad +c_7 \left[(r+\rho)^{sp}-r^{sp}\right]^{-1}\int_{I^\ominus_{r+\rho}(t_0)}\int_{B_{r+\rho}(x_0)} v_l(t,x)^{1+\xi}\,dx\,dt\\
&\quad+c_7\rho^{-sp}\left(\frac{R}{\rho}\right)^d\left(\int_{I^\ominus_{r+\rho}(t_0)}\int_{B_{r+\rho}(x_0)} v_l(t,x)^{\xi}\,dx\,dt\right)\left(\sup_{t\in I^\ominus_R(t_0)}\oint_{B_R(x_0)}u_+(t,y)^{p-1}\,dy\right),
\end{split}\end{equation*}
where the last term on the right-hand side of the above inequality is based on the fact that $0\le \phi(t,x)\le1$. The proof is finished.
\end{proof}

\section{Proof of Theorem \ref{main1}}\label{section3}
For $l, C>0$, recalling $v_l(t,x)$ as defined by \eqref{vl}, we can rewrite $$v_l(t,x)=\left(\tilde u(t,x)-l\right)_+,$$
where
$$\tilde u(t,x):=u(t,x)-C\int^t_{t_0-(r+\rho)^{sp}}\int_{B_R(x_0)^c}\frac{u_+(s,y)^{p-1}}{|x_0-y|^{d+sp}}\,dy\,ds.$$ In the following, we always take $C>0$ to be that in Proposition \ref{Cl}.
Obviously, for any $l_2\ge l_1>0$, we have $v_{l_1}(t,x)\ge v_{l_2}(t,x)$.
Furthermore, if $v_{l_2}(t,x)>0$ then $\tilde u(t,x)>l_2$ and so $v_{l_1}(t,x)=\tilde u(t,x)-l_1>l_2-l_1$. Hence, for any $l_2\ge l_1>0$ and $m_2\ge m_1>0$, it follows that
\begin{equation}\label{L}
v_{l_2}(t,x)^{m_1}\le v_{l_1}(t,x)^{m_2}/(l_2-l_1)^{m_2-m_1}.
\end{equation}
Note that
$\tilde u(t,x)-l$ is
a subsolution to $\partial_tu-\sL_tu=0$ in $I^\ominus_R(t_0)\times B_R(x_0)$, and so $v_l(t,x)$ is also a subsolution to $\partial_tu-\sL_tu=0$ in $I^\ominus_R(t_0)\times B_R(x_0)$, thanks to Lemma \ref{Z}.

\medskip

Now, it is a position to present the

\begin{proof}[Proof of Theorem $\ref{main1}$] (1) We first consider $p\ge 2$, and the proof is split into three steps

(i) Recall that $R/2\le r<r+\rho\le R$ with $r+\rho/2\le3R/4$.
In particular, $\rho/2\le 3R/4-R/2=R/4$, and so $\rho\le R/2\le r$.
According to Lemma \ref{L:lemma3.3}\,(i) with $f(t,x)=v_l(t,x)$,
\begin{equation*}\begin{split}
&\int_{I^\ominus_r(t_0)} \oint_{B_r(x_0)}v_l(t,x)^{(1+2s/d)p}\,dx\,dt\\
&\le c_1\bigg[r^{sp}\int_{I^\ominus_r(t_0)}\int_{B_r(x_0)} \oint_{B_r(x_0)}|v_l(t,x)-v_l(t,y)|^p J(t;x,y)\,dx\,dy\,dt\\
&\qquad\quad+\int_{I^\ominus_r(t_0)} \oint_{B_r(x_0)}
v_l(t,x)^{p}\,dx\,dt\bigg]\times \left(\sup_{I^\ominus_r(t_0)}\oint_{B_r(x_0)}
v_l(t,x)^2\,dx\right)^{sp/d}\\
&=:c_1\left(J_1+J_2\right)\times J_3^{sp/d},
\end{split}\end{equation*}
where
$$J_1=r^{sp}\int_{I^\ominus_r(t_0)}\int_{B_r(x_0)} \oint_{B_r(x_0)} |v_l(t,x)-v_l(t,y)|^p J(t;x,y)\,dx\,dy\,dt
$$
and
$$J_2=\int_{I^\ominus_r(t_0)} \oint_{B_r(x_0)} v_l(t,x)^{p}\,dx\,dt,\qquad J_3=\sup_{I^\ominus_r(t_0)}\oint_{B_r(x_0)}
v_l(t,x)^2\,dx.$$

According to Proposition \ref{Cl} with $\xi=1$,
\begin{align*}
J_1\le&
c_2r^{sp}\bigg[\rho^{-sp}\int_{I^\ominus_{r+\rho}(t_0)} \oint_{B_{r+\rho}(x_0)}v_l(t,x)^p\,dx\,dt+\left[(r+\rho)^{sp}-r^{sp}\right]^{-1}\int_{I^\ominus_{r+\rho}(t_0)} \oint_{B_{r+\rho}(x_0)}v_l(t,x)^2\,dx\,dt
\\
&\qquad\quad+\rho^{-sp}\left(\frac{R}{\rho}\right)^d\left(\int_{I^\ominus_{r+\rho}(t_0)} \oint_{B_{r+\rho}(x_0)}v_l(t,x) \,dx\,dt\right)\bigg(\sup_{t\in I^\ominus_R(t_0)}\oint_{B_R(x_0)}
u_+(t,y)^{p-1}\,dy\bigg)\bigg]. \end{align*} It is obvious that
\begin{align*}
J_2\le &r^{sp}\rho^{-sp}\int_{I^\ominus_{r+\rho}(t_0)} \oint_{B_{r+\rho}(x_0)}v_l(t,x)^p\,dx\,dt\\
\le&
r^{sp}\bigg[\rho^{-sp}\int_{I^\ominus_{r+\rho}(t_0)} \oint_{B_{r+\rho}(x_0)}v_l(t,x)^p\,dx\,dt+\left[(r+\rho)^{sp}-r^{sp}\right]^{-1}\int_{I^\ominus_{r+\rho}(t_0)} \oint_{B_{r+\rho}(x_0)}v_l(t,x)^2\,dx\,dt
\\
&\quad\quad+\rho^{-sp}\left(\frac{R}{\rho}\right)^d\left(\int_{I^\ominus_{r+\rho}(t_0)} \oint_{B_{r+\rho}(x_0)}v_l(t,x) \,dx\,dt\right)\bigg(\sup_{t\in I^\ominus_R(t_0)}\oint_{B_R(x_0)}
u_+(t,y)^{p-1}\,dy\bigg)\bigg].
\end{align*}
On the other hand, by Proposition \ref{Cl} with $\xi=1$ again, we get
\begin{align*}
&J_3\le  c_{3}\bigg[\rho^{-sp}\int_{I^\ominus_{r+\rho}(t_0)} \oint_{B_{r+\rho}(x_0)}v_l(t,x)^p\,dx\,dt+\left[(r+\rho)^{sp}-r^{sp}\right]^{-1}\int_{I^\ominus_{r+\rho}(t_0)} \oint_{B_{r+\rho}(x_0)}v_l(t,x)^2\,dx\,dt
\\
&\qquad\qquad\,\,+\rho^{-sp}\left(\frac{R}{\rho}\right)^d\left(\int_{I^\ominus_{r+\rho}(t_0)} \oint_{B_{r+\rho}(x_0)}v_l(t,x) \,dx\,dt\right)\bigg(\sup_{t\in I^\ominus_R(t_0)}\oint_{B_R(x_0)}
u_+(t,y)^{p-1}\,dy\bigg)\bigg].
\end{align*}
Putting all the inequalities together, we find that
\begin{align*}
&\oint_{I^\ominus_r(t_0)} \oint_{B_r(x_0)}v_l(t,x)^{(1+2s/d)p}\,dx\,dt
\le c_1
r^{-sp}\left(J_1+J_2\right)\times J_3^{sp/d}\\
&\le c_{4}\bigg[\rho^{-sp}\int_{I^\ominus_{r+\rho}(t_0)} \oint_{B_{r+\rho}(x_0)}v_l(t,x)^p\,dx\,dt+\left[(r+\rho)^{sp}-r^{sp}\right]^{-1}\int_{I^\ominus_{r+\rho}(t_0)} \oint_{B_{r+\rho}(x_0)}v_l(t,x)^2\,dx\,dt
\\
&\qquad\quad\,\,+\rho^{-sp}\left(\frac{R}{\rho}\right)^d\left(\int_{I^\ominus_{r+\rho}(t_0)} \oint_{B_{r+\rho}(x_0)}v_l(t,x) \,dx\,dt\right)\bigg(\sup_{t\in I^\ominus_R(t_0)}\oint_{B_R(x_0)}
u_+(t,y)^{p-1}\,dy\bigg)\bigg]^{{1+sp/d}}\\
&\le c_5\bigg[\rho^{-sp}R^{sp}\oint_{I^\ominus_{r+\rho}(t_0)} \oint_{B_{r+\rho}(x_0)}v_l(t,x)^p\,dx\,dt+\left[(r+\rho)^{sp}-r^{sp}\right]^{-1}R^{sp}\oint_{I^\ominus_{r+\rho}(t_0)} \oint_{B_{r+\rho}(x_0)}v_l(t,x)^2\,dx\,dt
\\
&\qquad\quad\,\, +\left(\frac{R}{\rho}\right)^{d+sp}\left(\oint_{I^\ominus_{r+\rho}(t_0)} \oint_{B_{r+\rho}(x_0)}v_l(t,x) \,dx\,dt\right)\bigg(\sup_{t\in I^\ominus_R(t_0)}\oint_{B_R(x_0)}
u_+(t,y)^{p-1}\,dy\bigg)\bigg]^{1+sp/d},
\end{align*}
where in the last inequality we used the fact that $R/2\le r+\rho\le R$.

(ii) Now let $0<k<l$ be arbitrary. By the definition of $v_{l}(t,x)$ and \eqref{L},
it holds that
\begin{equation*}\begin{split}
&\oint_{I^\ominus_{r+\rho}(t_0)} \oint_{B_{r+\rho}(x_0)} v_{\frac{l+k}{2}}(t,x)^p\,dx\,dt\le\oint_{I^\ominus_{r+\rho}(t_0)} \oint_{B_{r+\rho}(x_0)}v_k(t,x)^p\,dx\,dt,\\
&\oint_{I^\ominus_{r+\rho}(t_0)} \oint_{B_{r+\rho}(x_0)}  v_{\frac{l+k}{2}}(t,x) \,dx\,dt\le\left(\frac{l-k}{2}\right)^{-(p-1)}\oint_{I^\ominus_{r+\rho}(t_0)} \oint_{B_{r+\rho}(x_0)}v_k(t,x)^p\,dx\,dt,\\
&\oint_{I^\ominus_{r+\rho}(t_0)} \oint_{B_{r+\rho}(x_0)} v_{\frac{l+k}{2}}(t,x)^{2}\,dx\,dt\le\left(\frac{l-k}{2}\right)^{-(p-2)}\oint_{I^\ominus_{r+\rho}(t_0)} \oint_{B_{r+\rho}(x_0)}v_k(t,x)^p\,dx\,dt,
\end{split}\end{equation*} where in the last inequality we used the fact that $p\ge2$.
Then, by applying the last inequality in Step (i) with $(l+k)/2$ in place of $l$ and using
the three estimates above, we find that
\begin{align*}
&\oint_{I^\ominus_r(t_0)} \oint_{B_r(x_0)} v_{\frac{l+k}{2}}(t,x)^{(1+2s/d)p}\,dx\,dt\\
&\le c_5\bigg[\rho^{-sp}R^{sp}\oint_{I^\ominus_{r+\rho}(t_0)} \oint_{B_{r+\rho}(x_0)}v_{\frac{l+k}{2}}(t,x)^p\,dx\,dt+\left[(r+\rho)^{sp}-r^{sp}\right]^{-1}R^{sp}\oint_{I^\ominus_{r+\rho}(t_0)} \oint_{B_{r+\rho}(x_0)}v_{\frac{l+k}{2}}(t,x)^2\,dx\,dt
\\
&\qquad\quad\,\, +\left(\frac{R}{\rho}\right)^{d+sp}\left(\oint_{I^\ominus_{r+\rho}(t_0)} \oint_{B_{r+\rho}(x_0)}  v_{\frac{l+k}{2}}(t,x)\,dx\,dt\right)\bigg(\sup_{t\in I^\ominus_R(t_0)}\oint_{B_R(x_0)}
u_+(t,y)^{p-1}\,dy\bigg)\bigg]^{{1+sp/d}}\\
&\le c_5\bigg[\rho^{-sp}R^{sp}\oint_{I^\ominus_{r+\rho}(t_0)} \oint_{B_{r+\rho}(x_0)}v_{k}(t,x)^p\,dx\,dt\\
&\qquad\quad\,\,+\left[(r+\rho)^{sp}-r^{sp}\right]^{-1}R^{sp}\left(\frac{l-k}{2}\right)^{-(p-2)}\oint_{I^\ominus_{r+\rho}(t_0)} \oint_{B_{r+\rho}(x_0)}v_{k}(t,x)^p\,dx\,dt\\
&\qquad\quad\,\, +\left(\frac{R}{\rho}\right)^{d+sp}\left(\frac{l-k}{2}\right)^{-(p-1)}\left(\oint_{I^\ominus_{r+\rho}(t_0)} \oint_{B_{r+\rho}(x_0)}v_{k}(t,x)^p\,dx\,dt\right)\bigg(\sup_{t\in I^\ominus_R(t_0)}\oint_{B_R(x_0)}
u_+(t,y)^{p-1}\,dy\bigg)\bigg]^{{1+sp/d}}.
\end{align*}
On the other hand, by \eqref{L} again, it holds that
\begin{equation*}
\oint_{I^\ominus_r(t_0)} \oint_{B_r(x_0)} v_{\frac{l+k}{2}}(t,x)^{(1+2s/d)p}\,dx\,dt
\ge\left(\frac{l-k}{2}\right)^{2sp/d}\oint_{I^\ominus_r(t_0)} \oint_{B_r(x_0)}v_l(t,x)^p\,dx\,dt.
\end{equation*}
Thus,
\begin{align*}
&\left(\frac{l-k}{2}\right)^{2sp/d}\oint_{I^\ominus_r(t_0)} \oint_{B_r(x_0)}v_l(t,x)^p\,dx\,dt\\
&\le c_5\bigg[\rho^{-sp}R^{sp}+\left[(r+\rho)^{sp}-r^{sp}\right]^{-1}R^{sp}\left(\frac{l-k}{2}\right)^{-(p-2)}\\
&\quad\qquad
+\left(\frac{R}{\rho}\right)^{d+sp}\left(\frac{l-k}{2}\right)^{-(p-1)}\sup_{t\in I^\ominus_R(t_0)}\oint_{B_R(x_0)}
u_+(t,y)^{p-1}\,dy\bigg]^{{1+sp/d}}\\
&\quad\,\times\left(\oint_{I^\ominus_{r+\rho}(t_0)} \oint_{B_{r+\rho}(x_0)}v_{k}(t,x)^p\,dx\,dt\right)^{{1+sp/d}}.
\end{align*}

(iii) The plan for the remainder of the proof is to iterate the above estimate. Let us now set up the iteration scheme. Define two sequences $r_i=(1+2^{-i})R/2$ and $l_i=M(1-2^{-i})$ for all $i\ge0$, where $M>0$ is to be determined later. It is clear that  $r_i\downarrow R/2$ and $l_i\uparrow M$ as $i\rightarrow\infty$. In particular, $r_0=R$, $l_0=0$ and $$r_{i+1}=r_i-\rho_{i+1}=R\left(1+2^{-i-1}\right)/2,\quad i\ge0,$$ where $\rho_i=2^{-i-1}R$; moreover, by the mean value theorem,  $\left[(r_i+\rho_i)^{sp}-r_i^{sp}\right]^{-1}\le c_6R^{1-sp}\rho_i^{-1}$ for $i\ge0$.
Define $$A_i=\oint_{I^\ominus_{r_i}(t_0)} \oint_{B_{r_i}(x_0)} v_{l_i}(t,x)^p\,dx\,dt,\quad i\ge 0.$$
It holds that for all $i\ge1$,
\begin{align*}
A_i&\le c_7\left(\frac{l_i-l_{i-1}}{2}\right)^{-2sp/d} \bigg[\rho_i^{-sp}R^{sp}
+\rho_i^{-1}R\left(\frac{l_i-l_{i-1}}{2}\right)^{-(p-2)}\\
&\qquad\qquad\qquad\qquad\qquad\quad
+\left(\frac{R}{\rho_i}\right)^{d+sp}\left(\frac{l_i-l_{i-1}}{2}\right)^{-(p-1)}\sup_{t\in I^\ominus_R(t_0)}\oint_{B_R(x_0)}u_+(t,x)^{p-1}\,dx\bigg]^{1+sp/d}A_{i-1}^{1+sp/d}\\
&\le c_8\left(\frac{2^{i+1}}{M}\right)^{2sp/d} \bigg[2^{(i+1)sp}
+ \frac{2^{(i+1)(p-1)}}{M^{p-2}}
+\frac{2^{(i+1)(p-1)}2^{(i+1)(d+sp)}}{M^{p-1}}\sup_{t\in I^\ominus_R(t_0)}\oint_{B_R(x_0)}u_+(t,x)^{p-1}\,dx\bigg]^{1+sp/d}A_{i-1}^{1+sp/d}\\
&\le c_9\frac{2^{\gamma i}}{M^{2sp/d}} \bigg[1+\frac{1}{M^{p-2}}
+\frac{\sup_{t\in I^\ominus_R(t_0)}\oint_{B_R(x_0)}u_+(t,x)^{p-1}\,dx}{M^{p-1}}\bigg]^{1+sp/d}A_{i-1}^{1+sp/d},
 \end{align*}
where $\gamma=2sp/d+(d+sp+p-1)(1+sp/d)$.

In the following, we choose $$M=\sup_{t\in I^\ominus_R(t_0)}\oint_{B_R(x_0)}u_+(t,x)^{p-1}\,dx+\left\{C_0\left[1+2\left(\sup_{t\in I^\ominus_R(t_0)}\oint_{B_R(x_0)}u_+(t,x)^{p-1}\,dx\right)^{2-p}\right]^{1+d/(sp)}A_0 \right\}^{1/2},$$
 where $$C_0=2^{\gamma(d/(sp))^2}(c_92^\gamma)^{d/(sp)}.$$
 Then, we have
\begin{align*}A_0\le &C_0^{-1}M^2\left[1+2\left(\sup_{t\in I^\ominus_R(t_0)}\oint_{B_R(x_0)}u_+(t,x)^{p-1}\,dx\right)^{2-p}\right]^{-1-d/(sp)}\\
\le &C_0^{-1}M^2\left[1+2M^{2-p}\right]^{-1-d/(sp)}\le C_0^{-1}M^2\left(1+M^{2-p}+\left(\sup_{t\in I^\ominus_R(t_0)}\oint_{B_R(x_0)}u_+(t,x)^{p-1}\,dx\right)M^{1-p}\right)^{-1-d/(sp)}\\
=&\left(2^{\gamma}\right)^{-(d/(sp))^2} \left(\frac{c_92^\gamma}{M^{2sp/d}}\left[1+\frac{1}{M^{p-2}}+\frac{\sup_{t\in I^\ominus_R(t_0)}\oint_{B_R(x_0)}u_+(t,x)^{p-1}\,dx}{M^{p-1}}\right]^{1+sp/d}\right)^{-d/(sp)},\end{align*} where in the
last two
inequalities we used the facts that $p\ge2$ and $M\ge \sup_{t\in I^\ominus_R(t_0)}\oint_{B_R(x_0)}u_+(t,x)^{p-1}\,dx$.
This, along with Lemma \ref{iteration} with $Y_j=A_j$
and $\beta=sp/d$, yields that
$$A_i\rightarrow0,\quad i\rightarrow\infty.$$
Hence, we have
\begin{align*}
&\sup_{B_{R/2}(x_0)\times I^\ominus_{R/2}(t_0)}\left(u-C\int_{I^\ominus_{R/2}(t_0)}\int_{B_R(x_0)^c}\frac{u_+(t,y)^{p-1}}{|x_0-y|^{d+sp}}dydt\right)\\
&\le
\sup_{t\in I^\ominus_R(t_0)}\oint_{B_R(x_0)}u_+(t,x)^{p-1}\,dx+\left\{C_0\left[1+2\left(\sup_{t\in I^\ominus_R(t_0)}\oint_{B_R(x_0)}u_+(t,x)^{p-1}\,dx\right)^{2-p}\right]^{1+d/(sp)}A_0 \right\}^{1/2}.
 \end{align*}
In particular,
noting that $$\left(u(t,x)-C\int^t_{t_0-R^{sp}}\int_{B_R(x_0)^c}\frac{u_+(s,y)^{p-1}}{|x_0-y|^{d+sp}}\,dy\,ds\right)_+^p
\le u_+(t,x)^p,$$
we have
\begin{equation}\label{J}\begin{split}
&\sup_{B_{R/2}(x_0)\times I^\ominus_{R/2}(t_0)}u\\
&\le  \sup_{t\in I^\ominus_R(t_0)}\oint_{B_R(x_0)}u_+(t,x)^{p-1}\,dx  +C{\rm Tail}_{p-1}^{p-1}(u; x_0,R, t_0-R^{sp}, t_0)
\\
&\quad +c_{10}\left[1+2\left(\sup_{t\in I^\ominus_R(t_0)}\oint_{B_R(x_0)}u_+(t,x)^{p-1}\,dx\right)^{2-p}\right]^
{(1+d/(sp))/2}\left(\oint_{I^\ominus_R(t_0)}\oint_{ B_R(x_0)}u_+(t,x)^p\,dx\,dt\right)^{1/2}\\
&\le  \sup_{t\in I^\ominus_R(t_0)}\oint_{B_R(x_0)}u_+(t,x)^{p-1}\,dx  +C{\rm Tail}_{p-1}^{p-1}(u; x_0,R, t_0-R^{sp}, t_0)
\\
&\quad +c_{10}\left[1+2\left(\oint_{I^\ominus_R(t_0)}\oint_{B_R(x_0)}u_+(t,x)^{p-1}\,dx\,dt\right)^{2-p}\right]^
{(1+d/(sp))/2}\left(\oint_{I^\ominus_R(t_0)}\oint_{ B_R(x_0)}u_+(t,x)^p\,dx\,dt\right)^{1/2},
\end{split}\end{equation} where in the last inequality we used the facts that $p\ge2$ and
$$\oint_{I^\ominus_R(t_0)}\oint_{B_R(x_0)}u_+(t,x)^{p-1}\,dx\,dt\le \sup_{t\in I^\ominus_R(t_0)}\oint_{B_R(x_0)}u_+(t,x)^{p-1}\,dx.$$

Since $u$ is a subsolution to $\partial_tu-\sL_tu=0$ in $I^\ominus_R(t_0)\times B_R(x_0)$, $u_+$ is also a subsolution to $\partial_tu-\sL_tu=0$ in $I^\ominus_R(t_0)\times B_R(x_0)$ thanks to
Lemma \ref{Z}.
Hence, for the first term in the right hand side of the inequality above, by applying the H\"{o}lder inequality and Proposition \ref{sup} to $u_+$, we get
\begin{align}\label{K}
 \sup_{t\in I^\ominus_R(t_0)}\oint_{B_R(x_0)}u_+(t,x)^{p-1}\,dx
&\le\sup_{I^\ominus_R(t_0)}\left(\oint_{B_R(x_0)}u_+(t,x)^p\,dx\right)^{(p-1)/{p}}\nonumber\\
&\le c_{11}\left(\oint_{I^\ominus_R(t_0)}\oint_{B_R(x_0)}u_+(t,x)^{2p-2}\,dx\,dt+\oint_{I^\ominus_R(t_0)}\oint_{B_R(x_0)}u_+(t,x)^p\,dx\,dt\right)^{(p-1)/{p}}\\
&\quad\,\,\,+
c_{11}\left({\rm Tail}_{p-1}^{p-1}(u; x_0,R, t_0-R^{sp}, t_0)\right)^{p-1}\nonumber.
\end{align}
Combining \eqref{J} with \eqref{K},
we obtain
\begin{align*}
&\sup_{B_{R/2}(x_0)\times I^\ominus_{R/2}(t_0)}u\\
&\le c_{12}\left(\oint_{I^\ominus_{R}(t_0)}\oint_{B_{R}(x_0)}u_+(t,x)^{2p-2}\,dx\,dt+\oint_{I^\ominus_{R}(t_0)}\oint_{B_{R}(x_0)}u_+(t,x)^p\,dx\,dt\right)^{{(p-1)}/{p}}\\
&\quad\,\,\,+c_{12}\left[1+2\left(\oint_{I^\ominus_R(t_0)}\oint_{B_R(x_0)}u_+(t,x)^{p-1}\,dx\,dt\right)^{2-p}\right]^
{(1+d/(sp))/2}\left(\oint_{I^\ominus_R(t_0)}\oint_{ B_R(x_0)}u_+(t,x)^p\,dx\,dt\right)^{1/2}\\
&\quad\,\,\,+
c_{12}{\rm Tail}_{p-1}^{p-1}(u; x_0,R, t_0-{R}^{sp}, t_0)+c_{12}\left({\rm Tail}_{p-1}^{p-1}(u; x_0,R, t_0-R^{sp}, t_0)\right)^{p-1}.
\end{align*}
The proof for the case that $p\ge2$ is complete.

(2) Next we consider $p\in (1,2)$, and the proof is also split into three steps. For the safe of completeness and for the comparison with the case that $p\in [2,\infty)$, we present some details here.

(i) By applying Lemma \ref{L:lemma3.3}\,(ii) to $v_l^{(p-1+\xi)/p}(t,x)$ with $\xi>0$, we find that
\begin{equation*}\begin{split}
&\int_{I^\ominus_r(t_0)} \oint_{B_r(x_0)}  v_l(t,x)^{(p-1+\xi)(1+sp/d)}\,dx\,dt\\
&\le c_1\bigg[r^{sp}\int_{I^\ominus_r(t_0)}\int_{B_r(x_0)} \oint_{B_r(x_0)}|v_l^{(p-1+\xi)/p}(t,x)-v_l^{(p-1+\xi)/p}(t,y)|^p J(t;x,y)\,dx\,dy\,dt\\
&\qquad\quad+\int_{I^\ominus_r(t_0)} \oint_{B_r(x_0)}
v_l(t,x)^{p-1+\xi}\,dx\,dt\bigg]\times \left(\sup_{I^\ominus_r(t_0)}\oint_{B_r(x_0)}
v_l(t,x)^{p-1+\xi}\,dx\right)^{sp/d}\\
&=:c_1\left(K_1+K_2\right)\times K_3^{sp/d},
\end{split}\end{equation*}
where
$$K_1=r^{sp}\int_{I^\ominus_r(t_0)}\int_{B_r(x_0)} \oint_{B_r(x_0)} |v_l^{(p-1+\xi)/p}(t,x)-v_l^{(p-1+\xi)/p}(t,y)|^p J(t;x,y)\,dx\,dy\,dt
$$
and
$$K_2=\int_{I^\ominus_r(t_0)} \oint_{B_r(x_0)} v_l(t,x)^{p-1+\xi}\,dx\,dt,\qquad K_3=\sup_{I^\ominus_r(t_0)}\oint_{B_r(x_0)}
v_l(t,x)^{p-1+\xi}\,dx.$$

Below, we take $\xi>d(2-p)/(sp)$. Then, $(p-1+\xi)(1+sp/d)>1+\xi$. Thus, according to Proposition \ref{Cl},
\begin{align*}
K_1\le&c_2 r^{sp} \Bigg[ \rho^{-sp}\int_{I^\ominus_{r+\rho}(t_0)}\oint_{B_{r+\rho}(x_0)} v_l(t,x)^{p-1+\xi}\,dx\,dt+ \left[(r+\rho)^{sp}-r^{sp}\right]^{-1}\int_{I^\ominus_{r+\rho}(t_0)}\oint_{B_{r+\rho}(x_0)} v_l(t,x)^{1+\xi}\,dx\,dt\\
&\quad\quad\quad+ \rho^{-sp}\left(\frac{R}{\rho}\right)^d\left(\int_{I^\ominus_{r+\rho}(t_0)}\oint_{B_{r+\rho}(x_0)} v_l(t,x)^{\xi}\,dx\,dt\right)\left(\sup_{t\in I^\ominus_R(t_0)}\oint_{B_R(x_0)}u_+(t,y)^{p-1}\,dy\right)\Bigg].
\end{align*} It holds trivially that
\begin{align*}
K_2\le&r^{sp} \Bigg[\rho^{-sp}\int_{I^\ominus_{r+\rho}(t_0)}\oint_{B_{r+\rho}(x_0)} v_l(t,x)^{p-1+\xi}\,dx\,dt+ \left[(r+\rho)^{sp}-r^{sp}\right]^{-1}\int_{I^\ominus_{r+\rho}(t_0)}\oint_{B_{r+\rho}(x_0)} v_l(t,x)^{1+\xi}\,dx\,dt\\
&\quad\quad\quad+ \rho^{-sp}\left(\frac{R}{\rho}\right)^d\left(\int_{I^\ominus_{r+\rho}(t_0)}\oint_{B_{r+\rho}(x_0)} v_l(t,x)^{\xi}\,dx\,dt\right)\left(\sup_{t\in I^\ominus_R(t_0)}\oint_{B_R(x_0)}u_+(t,y)^{p-1}\,dy\right)\Bigg].\end{align*} On the other hand,
by applying $p\in (1,2)$ and the Jensen inequality, as well as Proposition \ref{Cl} again,
we have
\begin{align*}K_3\le& \left(\sup_{I^\ominus_r(t_0)}\oint_{B_r(x_0)}
v_l(t,x)^{1+\xi}\,dx\right)^{(p-1+\xi)/(1+\xi)}\\
\le&c_3\Bigg[ \rho^{-sp}\int_{I^\ominus_{r+\rho}(t_0)}\oint_{B_{r+\rho}(x_0)} v_l(t,x)^{p-1+\xi}\,dx\,dt+ \left[(r+\rho)^{sp}-r^{sp}\right]^{-1}\int_{I^\ominus_{r+\rho}(t_0)}\oint_{B_{r+\rho}(x_0)} v_l(t,x)^{1+\xi}\,dx\,dt\\
&\quad\quad+ \rho^{-sp}\left(\frac{R}{\rho}\right)^d\left(\int_{I^\ominus_{r+\rho}(t_0)}\oint_{B_{r+\rho}(x_0)} v_l(t,x)^{\xi}\,dx\,dt\right)\left(\sup_{t\in I^\ominus_R(t_0)}\oint_{B_R(x_0)}u_+(t,y)^{p-1}\,dy\right)\Bigg]^{(p-1+\xi)/(1+\xi)}.
\end{align*}
Hence, putting all the estimates together, we find that
\begin{align*}
&\oint_{I^\ominus_r(t_0)} \oint_{B_r(x_0)}  v_l(t,x)^{(p-1+\xi)(1+sp/d)}\,dx\,dt\le c_1r^{-sp}\left(K_1+K_2\right)\times K_3^{sp/d}\\
&\le c_4 \Bigg[ \rho^{-sp}\int_{I^\ominus_{r+\rho}(t_0)}\oint_{B_{r+\rho}(x_0)} v_l(t,x)^{p-1+\xi}\,dx\,dt+ \left[(r+\rho)^{sp}-r^{sp}\right]^{-1}\int_{I^\ominus_{r+\rho}(t_0)}\oint_{B_{r+\rho}(x_0)} v_l(t,x)^{1+\xi}\,dx\,dt\\
&\quad\quad\,\,+ \rho^{-sp}\left(\frac{R}{\rho}\right)^d\left(\int_{I^\ominus_{r+\rho}(t_0)}\oint_{B_{r+\rho}(x_0)} v_l(t,x)^{\xi}\,dx\,dt\right)\left(\sup_{t\in I^\ominus_R(t_0)}\oint_{B_R(x_0)}u_+(t,y)^{p-1}\,dy\right)\Bigg]^{1+ sp(p-1+\xi) /(d(1+\xi))}\\
&\le c_5\Bigg[ \rho^{-sp}R^{sp}\oint_{I^\ominus_{r+\rho}(t_0)}\oint_{B_{r+\rho}(x_0)} v_l(t,x)^{p-1+\xi}\,dx\,dt+ \left[(r+\rho)^{sp}-r^{sp}\right]^{-1}R^{sp}\oint_{I^\ominus_{r+\rho}(t_0)}\oint_{B_{r+\rho}(x_0)} v_l(t,x)^{1+\xi}\,dx\,dt\\
&\quad+ \left(\frac{R}{\rho}\right)^{d+sp}\left(\oint_{I^\ominus_{r+\rho}(t_0)}\oint_{B_{r+\rho}(x_0)} v_l(t,x)^{\xi}\,dx\,dt\right)\left(\sup_{t\in I^\ominus_R(t_0)}\oint_{B_R(x_0)}u_+(t,y)^{p-1}\,dy\right)\Bigg]^{1+ sp(p-1+\xi) / (d(1+\xi))},
\end{align*}
where in the last inequality we used the fact that $R/2\le r+\rho\le R$.

(ii) Now let $0<k<l$ be arbitrary. By the definition of $v_{l}(t,x)$ and \eqref{L},
it holds that
\begin{equation*}\begin{split}
&\oint_{I^\ominus_{r+\rho}(t_0)} \oint_{B_{r+\rho}(x_0)}  v_{\frac{l+k}{2}}(t,x)^{1+\xi}\,dx\,dt\le\oint_{I^\ominus_{r+\rho}(t_0)} \oint_{B_{r+\rho}(x_0)}v_k(t,x)^{1+\xi}\,dx\,dt,\\
&\oint_{I^\ominus_{r+\rho}(t_0)} \oint_{B_{r+\rho}(x_0)}  v_{\frac{l+k}{2}}(t,x)^\xi \,dx\,dt\le\left(\frac{l-k}{2}\right)^{-1}\oint_{I^\ominus_{r+\rho}(t_0)} \oint_{B_{r+\rho}(x_0)}v_k(t,x)^{1+\xi}\,dx\,dt,\\
&\oint_{I^\ominus_{r+\rho}(t_0)} \oint_{B_{r+\rho}(x_0)}  v_{\frac{l+k}{2}}(t,x)^{p-1+\xi}\,dx\,dt\le\left(\frac{l-k}{2}\right)^{-(2-p)}\oint_{I^\ominus_{r+\rho}(t_0)} \oint_{B_{r+\rho}(x_0)}v_k(t,x)^{1+\xi}\,dx\,dt,
\end{split}\end{equation*}
where in the last inequality we used the fact that $p\in(1,2)$.
Then, by applying the last inequality in Step (i) with $(l+k)/2$ in place of $l$ and using
the three estimates above, we find that
\begin{align*}
&\oint_{I^\ominus_r(t_0)} \oint_{B_r(x_0)}  v_{\frac{l+k}{2}}(t,x)^{(p-1+\xi)(1+sp/d)}\,dx\,dt\\
&\le c_5\bigg[\rho^{-sp}R^{sp}\oint_{I^\ominus_{r+\rho}(t_0)} \oint_{B_{r+\rho}(x_0)}v_{\frac{l+k}{2}}(t,x)^{p-1+\xi}\,dx\,dt\\
&\qquad\quad +\left[(r+\rho)^{sp}-r^{sp}\right]^{-1}R^{sp}\oint_{I^\ominus_{r+\rho}(t_0)} \oint_{B_{r+\rho}(x_0)}v_{\frac{l+k}{2}}(t,x)^{1+\xi}\,dx\,dt\\
&\qquad\quad +\left(\frac{R}{\rho}\right)^{d+sp}\left(\oint_{I^\ominus_{r+\rho}(t_0)} \oint_{B_{r+\rho}(x_0)}  v_{\frac{l+k}{2}}(t,x)^\xi\,dx\,dt\right)\bigg(\sup_{t\in I^\ominus_R(t_0)}\oint_{B_R(x_0)}
u_+(t,y)^{p-1}\,dy\bigg)\bigg]^{1+ sp(p-1+\xi) /(d(1+\xi))}\\
&\le c_5\bigg[\rho^{-sp}R^{sp}\left(\frac{l-k}{2}\right)^{-(2-p)}\oint_{I^\ominus_{r+\rho}(t_0)} \oint_{B_{r+\rho}(x_0)}v_{k}(t,x)^{1+\xi}\,dx\,dt\\
&\qquad\quad
+\left[(r+\rho)^{sp}-r^{sp}\right]^{-1}R^{sp}\oint_{I^\ominus_{r+\rho}(t_0)} \oint_{B_{r+\rho}(x_0)}v_{k}(t,x)^{1+\xi}\,dx\,dt\\
&\qquad\quad  +\left(\frac{R}{\rho}\right)^{d+sp}\left(\frac{l-k}{2}\right)^{-1}\left(\oint_{I^\ominus_{r+\rho}(t_0)} \oint_{B_{r+\rho}(x_0)}v_{k}(t,x)^{1+\xi}\,dx\,dt\right)\\
&\qquad\qquad\times \bigg(\sup_{t\in I^\ominus_R(t_0)}\oint_{B_R(x_0)}
u_+(t,y)^{p-1}\,dy\bigg)\bigg]^{1+ sp(p-1+\xi) /(d(1+\xi))}.
\end{align*}
On the other hand, by \eqref{L} again, it holds that
\begin{equation*}
\oint_{I^\ominus_r(t_0)} \oint_{B_r(x_0)} v_{\frac{l+k}{2}}(t,x)^{(p-1+\xi)(1+sp/d)}\,dx\,dt
\ge\left(\frac{l-k}{2}\right)^{(p-1+\xi)(1+sp/d)-(1+\xi)}\oint_{I^\ominus_r(t_0)} \oint_{B_r(x_0)}v_l(t,x)^{1+\xi}\,dx\,dt.
\end{equation*}
Thus,
\begin{align*}
&\left(\frac{l-k}{2}\right)^{(p-1+\xi)(1+sp/d)-(1+\xi)}\oint_{I^\ominus_r(t_0)} \oint_{B_r(x_0)}v_l(t,x)^{1+\xi}\,dx\,dt\\
&\le c_5\bigg[\rho^{-sp}R^{sp}\left(\frac{l-k}{2}\right)^{-(2-p)}
+\left[(r+\rho)^{sp}-r^{sp}\right]^{-1}R^{sp}\\
&\quad\qquad +\left(\frac{R}{\rho}\right)^{d+sp}\left(\frac{l-k}{2}\right)^{-1}\sup_{t\in I^\ominus_R(t_0)}\oint_{B_R(x_0)}
u_+(t,y)^{p-1}\,dy\bigg]^{1+ sp(p-1+\xi) /(d(1+\xi))}\\
&\quad\,\times\left(\oint_{I^\ominus_{r+\rho}(t_0)} \oint_{B_{r+\rho}(x_0)}  v_{k}(t,x)^{1+\xi}\,dx\,dt\right)^{1+sp(p-1+\xi)/(d(1+\xi))}.
\end{align*}

(iii) The plan for the remainder of the proof is to iterate the above estimate. Let us now set up the iteration scheme. Define two sequences $l_i=M(1-2^{-i})$ and $\rho_i=2^{-i-1}R$ for all $i\ge0$, where $M>0$ is to be determined later. Set $r_0=R$, $l_0=0$ and $$r_{i+1}=r_i-\rho_{i+1}=R\left(1+2^{-1-i}\right)/2,\quad i\ge0.$$ In particular, $r_i\downarrow R/2$ and $l_i\uparrow M$ as $i\rightarrow\infty$, and $\left[(r_i+\rho_i)^{sp}-r_i^{sp}\right]^{-1}\le c_6 R^{1-sp}\rho_i^{-1}$ for $i\ge0$.
Define $$A_i=\oint_{I^\ominus_{r_i}(t_0)} \oint_{B_{r_i}(x_0)} v_{l_i}(t,x)^{1+\xi}\,dx\,dt,\quad i\ge0.$$
It holds that for all $i\ge 1$,
\begin{align*}
A_i
&\le c_7\left(\frac{l_i-l_{i-1}}{2}\right)^{-\big((p-1+\xi)(1+sp/d)-(1+\xi)\big)} \\
&\quad\times \bigg[\rho_i^{-sp}R^{sp}\left(\frac{l_i-l_{i-1}}{2}\right)^{-(2-p)}
+ R\rho_i^{-1}\\
&\qquad\qquad
+\left(\frac{R}{\rho_i}\right)^{d+sp}\left(\frac{l_i-l_{i-1}}{2}\right)^{-1}\sup_{t\in I^\ominus_R(t_0)}\oint_{B_R(x_0)}u_+(t,x)^{p-1}\,dx\bigg]^{1+sp(p-1+\xi)/(d(1+\xi))}\\
&\quad \times A_{i-1}^{1+sp(p-1+\xi)/(d(1+\xi))}\\
&\le c_8\left(\frac{2^{i+1}}{M}\right)^{(p-1+\xi)(1+sp/d)-(1+\xi)} \\
&\quad\times \bigg[\frac{2^{(i+1)sp}2^{(i+1)(2-p)}}{M^{2-p}}
+ {2^{i+1}}
+\frac{2^{i+1}2^{(i+1)(d+sp)}}{M}\sup_{t\in I^\ominus_R(t_0)}\oint_{B_R(x_0)}u_+(t,x)^{p-1}\,dx\bigg]^{1+sp(p-1+\xi)/(d(1+\xi))}\\
&\quad\times A_{i-1}^{1+sp(p-1+\xi)/(d(1+\xi))}\\
&\le \frac{c_92^{\gamma i}}{M^{(p-1+\xi)(1+sp/d)-(1+\xi)}} \bigg[\frac{1}{M^{2-p}}+1
+\frac{\sup_{t\in I^\ominus_R(t_0)}\oint_{B_R(x_0)}u_+(t,x)^{p-1}\,dx}{M}\bigg]^{1+sp(p-1+\xi)/(d(1+\xi))}\\
&\quad\times A_{i-1}^{1+sp(p-1+\xi)/(d(1+\xi))},
\end{align*}
where $\gamma=\big((p-1+\xi)(1+sp/d)-(1+\xi)\big) +(d+sp+1)\big(1+sp(p-1+\xi)/(d(1+\xi))\big)$.

Below, we take
$$M\ge \left(\sup_{t\in I^\ominus_R(t_0)}\oint_{B_R(x_0)}u_+(t,x)^{p-1}\,dx\right)^{1/(p-1)}.$$ Then, for all $i\ge1$,
\begin{align*}A_i\le & \frac{c_92^{\gamma i}}{M^{(p-1+\xi)(1+sp/d)-(1+\xi)}} (1+2M^{p-2}) ^{1+sp(p-1+\xi)/(d(1+\xi))} A_{i-1}^{1+sp(p-1+\xi)/(d(1+\xi))}\\
\le&\frac{c_92^{\gamma i}}{M^{(p-1+\xi)(1+sp/d)-(1+\xi)}} \left(1+2\left(\sup_{t\in I^\ominus_R(t_0)}\oint_{B_R(x_0)}u_+(t,x)^{p-1}\,dx\right)^{(p-2)/(p-1)}\right) ^{1+sp(p-1+\xi)/(d(1+\xi))} \\
&\times A_{i-1}^{1+sp(p-1+\xi)/(d(1+\xi))}\\
\le&\frac{c_92^{\gamma i}}{M^{(p-1+\xi)(1+sp/d)-(1+\xi)}} \left(1+2\left(\oint_{I^\ominus_R(t_0)}\oint_{B_R(x_0)}u_+(t,x)^{p-1}\,dx\,d t\right)^{(p-2)/(p-1)}\right) ^{1+sp(p-1+\xi)/(d(1+\xi))} \\
&\times A_{i-1}^{1+sp(p-1+\xi)/(d(1+\xi))},\end{align*} where in the last inequality we used the facts that $p\in (1,2)$ and
$$\oint_{I^\ominus_R(t_0)}\oint_{B_R(x_0)}u_+(t,x)^{p-1}\,dx\,d t\le \sup_{t\in I^\ominus_R(t_0)}\oint_{B_R(x_0)}u_+(t,x)^{p-1}\,dx.$$

For simplicity, in the following we set $\theta>0$ so that
$$(p-1+\xi)(1+sp/d)=(1+\xi)(1+\theta),$$ and write
$$\beta=sp(p-1+\xi)/(d(1+\xi))=sp(1+\theta)/(d(1+sp/d)).$$ Then, the estimate above is reduced
to
$$A_i\le  \frac{c_92^{\gamma i}}{M^{(1+\xi)\theta}}\left(1+2\left(\oint_{I^\ominus_R(t_0)}\oint_{B_R(x_0)}u_+(t,x)^{p-1}\,dx\,d t\right)^{(p-2)/(p-1)}\right) ^{1+\beta} A_{i-1}^ {1+\beta},\quad i\ge1.$$
Now, we take
\begin{align*}
M=&\left(\sup_{t\in I^\ominus_R(t_0)}\oint_{B_R(x_0)}u_+(t,x)^{p-1}\,dx\right)^{1/(p-1)}\\
&\quad+\left\{C_0\left[1+2\left(\oint_{I^\ominus_R(t_0)}\oint_{B_R(x_0)}u_+(t,x)^{p-1}\,dx\,d t\right)^{(p-2)/(p-1)}\right]^{(1+\beta)/\beta}A_0 \right\}^{\beta/((1+\xi)\theta)},
\end{align*}
where $C_0=(2^\gamma)^{1/\beta^2}(c_92^\gamma)^{1/\beta}$.
Then, we have
\begin{align*}
A_0\le &C_0^{-1}M^{(1+\xi)\theta/\beta}\left(1+2\left(\oint_{I^\ominus_R(t_0)}\oint_{B_R(x_0)}u_+(t,x)^{p-1}\,dx\,d t\right)^{(p-2)/(p-1)}\right)^{-(1+\beta)/\beta}\\
=& \left(2^{\gamma}\right)^{-1/\beta^2} \left(\frac{c_92^\gamma}{M^{(1+\xi)\theta}}\left[1+2\left(\oint_{I^\ominus_R(t_0)}\oint_{B_R(x_0)}u_+(t,x)^{p-1}\,dx\,d t\right)^{(p-2)/(p-1)}\right]^{1+\beta}\right)^{-1/\beta}.
\end{align*}
This, along with Lemma \ref{iteration} with $Y_j=A_j$, yields that
$$A_i\rightarrow0,\quad i\rightarrow\infty.$$
Hence, we have
\begin{align*}
&\sup_{B_{R/2}(x_0)\times I^\ominus_{R/2}(t_0)}\left(u-C\int_{I^\ominus_{R/2}(t_0)}\int_{B_R(x_0)^c}\frac{u_+(t,y)^{p-1}}{|x_0-y|^{d+sp}}\,dy\,dt\right)\\
&\le\left(\sup_{t\in I^\ominus_R(t_0)}\oint_{B_R(x_0)}u_+(t,x)^{p-1}\,dx\right)^{1/(p-1)}\\
&\quad+\left\{C_0\left[1+2\left(\oint_{I^\ominus_R(t_0)}\oint_{B_R(x_0)}u_+(t,x)^{p-1}\,dx\,d t\right)^{(p-2)/(p-1)}\right]^{(1+\beta)/\beta}A_0 \right\}^{\beta/((1+\xi)\theta)}.
 \end{align*}
In particular,
noting that $$\left(u(t,x)-C\int^t_{t_0-R^{sp}}\int_{B_R(x_0)^c}\frac{u_+(s,y)^{p-1}}{|x_0-y|^{d+sp}}\,dy\,ds\right)_+^{1+\xi}
\le u_+(t,x)^{1+\xi},$$
we have
\begin{equation}\label{Jj}\begin{split}
&\sup_{B_{R/2}(x_0)\times I^\ominus_{R/2}(t_0)}u\\
&\le\left(\sup_{t\in I^\ominus_R(t_0)}\oint_{B_R(x_0)}u_+(t,x)^{p-1}\,dx\right)^{1/(p-1)} +C{\rm Tail}_{p-1}^{p-1}(u; x_0,R, t_0-R^{sp}, t_0)
\\
&\quad +c_{10}\left[1+2\left(\oint_{I^\ominus_R(t_0)}\oint_{B_R(x_0)}u_+(t,x)^{p-1}\,dx\,d t\right)^{(p-2)/(p-1)}\right]^{(1+\beta)/((1+\xi)\theta)}\\
&\qquad\qquad\times\left(\oint_{I^\ominus_R(t_0)}\oint_{ B_R(x_0)}u_+(t,x)^{1+\xi}\,dx\,dt\right)^{\beta/((1+\xi)\theta)}.
\end{split}\end{equation}

Since $u$ is a subsolution to $\partial_tu-\sL_tu=0$ in $I^\ominus_R(t_0)\times B_R(x_0)$, $u_+$ is also a subsolution to $\partial_tu-\sL_tu=0$ in $I^\ominus_R(t_0)\times B_R(x_0)$ thanks to
Lemma \ref{Z}.
Hence, for the first term in the right hand side of the inequality above, by applying the H\"{o}lder inequality and Proposition \ref{sup} to $u_+$, we get
\begin{align*}
&\left(\sup_{t\in I^\ominus_R(t_0)}\oint_{B_R(x_0)}u_+(t,x)^{p-1}\,dx\right)^{1/(p-1)}\\
&\le\sup_{I^\ominus_R(t_0)}\left(\oint_{B_R(x_0)}u_+(t,x)^p\,dx\right)^{1/p}\\
&\le c_{11}\left(\oint_{I^\ominus_R(t_0)}\oint_{B_R(x_0)}u_+(t,x)^{2p-2}\,dx\,dt+\oint_{I^\ominus_R(t_0)}\oint_{B_R(x_0)}u_+(t,x)^p\,dx\,dt\right)^{1/p}\\
&\quad\,\,\,+
c_{11}{\rm Tail}_{p-1}^{p-1}(u; x_0,R, t_0-R^{sp}, t_0)
\end{align*}
Combining it with \eqref{Jj},
we obtain
\begin{align*}
 \sup_{B_{R/2}(x_0)\times I^\ominus_{R/2}(t_0)}u
&\le c_{12}\left(\oint_{I^\ominus_{R}(t_0)}\oint_{B_{R}(x_0)}u_+(t,x)^{2p-2}\,dx\,dt+\oint_{I^\ominus_{R}(t_0)}\oint_{B_{R}(x_0)}u_+(t,x)^p\,dx\,dt\right)^{1/p}\\
&\quad\,\,\,+c_{12}\left[1+2\left(\oint_{I^\ominus_R(t_0)}\oint_{B_R(x_0)}u_+(t,x)^{p-1}\,dx\,d t\right)^{(p-2)/(p-1)}\right]^{(1+\beta)/((1+\xi)\theta)}\\
&
\qquad\qquad\times\left(\oint_{I^\ominus_R(t_0)}\oint_{ B_R(x_0)}u_+(t,x)^{1+\xi}\,dx\,dt\right)^{\beta/((1+\xi)\theta)}\\
&\quad\,\,\,+c_{12}{\rm Tail}_{p-1}^{p-1}(u; x_0,R, t_0-{R}^{sp}, t_0),
\end{align*}
where $\theta=\frac{(p-1+\xi)(1+sp/d)}{1+\xi}-1$ and $\beta=sp(p-1+\xi)/(d(1+\xi))=sp(1+\theta)/(d(1+sp/d))$.
The proof for the case that $p\in(1,2)$ is complete.
\end{proof}

\bigskip

\noindent {\bf Acknowledgements.}\,\,
 The research of Takashi Kumagai is supported
by JSPS KAKENHI Grant Number JP22H00099 and 23KK0050.
The research of Jian Wang is supported by the National Key R\&D Program of China (2022YFA1006003) and  the National Natural Science Foundation of China (Nos.\ 12071076 and 12225104).

\vskip 0.3truein

{\bf Takashi Kumagai:}
 Department of Mathematics,
Waseda University, Shinjuku, Tokyo, 169-8555, Japan.
\newline \texttt{t-kumagai@waseda.jp}

\bigskip

{\bf Jian Wang:}
    School of Mathematics and Statistics \& Key Laboratory of Analytical Mathematics and Applications (Ministry of Education) \& Fujian Provincial Key Laboratory
of Statistics and Artificial Intelligence, Fujian Normal University, 350117, Fuzhou, P.R. China.
     \newline \texttt{jianwang@fjnu.edu.cn}

\bigskip

{\bf Meng-ge Zhang:}
    School of Mathematics and Statistics, Fujian Normal University,
    350117, Fuzhou, P.R. China.
     \newline \texttt{mgzhangmath@163.com}

\end{document}